\renewcommand\labelenumi{(\roman{enumi})}
\renewcommand\theenumi\labelenumi
\def \Int{\,\Int\,}
\def \iint{\int\!\!\!\int}
\def\Xint#1{\mathchoice
    {\XXint\displaystyle\textstyle{#1}}%
    {\XXint\textstyle\scriptstyle{#1}}%
    {\XXint\scriptstyle\scriptscriptstyle{#1}}%
    {\XXint\scriptscriptstyle\scriptscriptstyle{#1}}%
    \!\int}
    \def\XXint#1#2#3{{\setbox0=\hbox{$#1{#2#3}{\int}$}
    \vcenter{\hbox{$#2#3$}}\kern-.5\wd0}}
    \def\fint{\Xint-}
\def\Xint#1{\mathchoice
    {\XXint\displaystyle\textstyle{#1}}%
    {\XXint\textstyle\scriptstyle{#1}}%
    {\XXint\scriptstyle\scriptscriptstyle{#1}}%
    {\XXint\scriptscriptstyle\scriptscriptstyle{#1}}%
    \!\int}
    \def\XXint#1#2#3{{\setbox0=\hbox{$#1{#2#3}{\int}$}
    \vcenter{\hbox{$#2#3$}}\kern-.5\wd0}}
\def \C{ \mathbb{C} }
\def \R{ \mathbb{R} }
\newcommand{\ree}{\mathbb{R}^{n+1}}
\def\div{\mathop{\operatorname{div}}\nolimits}
\DeclareMathOperator*{\essinf}{ess\,inf}
\newcommand{\loc}{\mathrm{loc}}
\renewcommand{\chi}{\mathbf{1}}
\theoremstyle{plain}
\newtheorem{theorem}[equation]{Theorem}
\newtheorem{lemma}[equation]{Lemma}
\newtheorem{corollary}[equation]{Corollary}
\newtheorem{proposition}[equation]{Proposition}
\theoremstyle{definition}
\newtheorem{definition}[equation]{Definition}
\theoremstyle{remark}
\newtheorem{remark}[equation]{Remark}
\numberwithin{equation}{section}
\numberwithin{equation}{section}
\numberwithin{equation}{section}
\def \re{ \mathbb{R} }
\def\essinf{\mathop\mathrm{\,ess\,inf\,}}
\begin{document}
\allowdisplaybreaks
\author{Jos\'e Mar\'ia Martell}
\address{Jos\'e Mar\'ia Martell
\\
Instituto de Ciencias Matem\'aticas CSIC-UAM-UC3M-UCM
\\
Consejo Superior de Investigaciones Cientificas
\\
C/ Nicol\'as Cabrera, 13-15
\\
E-28049 Madrid, Spain} \email{chema.martell@icmat.es}

\author{Pierre Portal}
\address{Pierre Portal
Mathematical Sciences Institute
\\
Australian National University
\\
Ngunnawal and Ngambri Country
\\
Canberra ACT 0200, Australia}

\email{pierre.portal@anu.edu.au}

\title{A note on Rubio de Francia's extrapolation in tent spaces and applications}

	\thanks{The first author acknowledges financial support from MCIN/AEI/ 10.13039/501100011033 grants CEX2019-000904-S and PID2019-107914GB-I00. This research was primarily conducted when the second author visited the first author at the Instituto de Ciencias Matem\'aticas (ICMAT) in Madrid. The perfect working environment provided by the ICMAT is very much appreciated. 	
	}

\subjclass[2010]{42B25, 42B20, 42B35, 42B37, 35J15}
\keywords{Muckenhoupt weights, Rubio de Francia extrapolation, tent spaces, Hardy-Littlewood maximal function, Calder\'on-Zygmund operators, singular integral operators, Kato conjecture, fractional integrals.}

\date{\today}

\begin{abstract}
The Rubio de Francia extrapolation theorem is a very powerful result which states that in order to show that certain operators satisfy weighted norm inequalities with Muckenhoupt weights it suffices to see that the corresponding inequalities hold for
some fixed exponent, for instance $p=2$. In this paper we extend this result and show that this extrapolation principle allows one to obtain weighted estimates in tent spaces. From our extrapolation result we automatically derive new estimates (and reprove some other) concerning Calder\'on-Zygmund operators, operators associated with the Kato conjecture, or fractional operators. 
\end{abstract}

\maketitle

\tableofcontents

\section{Introduction}

The celebrated extrapolation theorem of Rubio de Francia \cite{RdF} gives that, if an operator $T$ satisfies 
\begin{equation*}%\label{eq:intro-RdF-1}
\|Tf\|_{L^{p_0}(w)}\lesssim \|f\|_{L^{p_0}(w)}, \quad
\text{for some }p_0\in[1,\infty)\text{ and every }w\in A_{p_0}, 
\end{equation*}
then
\begin{equation*}%\label{eq:intro-RdF-2}
\|Tf\|_{L^{p}(w)}\lesssim\|f\|_{L^{p}(w)},
\quad \text{for every }p\in(1,\infty)\text{ and every }w\in A_{p}.
\end{equation*}
Here, $A_{p}$ denotes the class of Muckenhoupt weights in $\re^n$ with the underlying Lebesgue measure and $L^{p}(w)$ denotes the associated weighted Lebesgue space with underlying measre $w(x)\,dx$. See Section \ref{section:prelim} for precise definitions. 

The theory of Muckenhoupt weights has been extensively studied, and one of its most basic features is the fact that $w\in A_{p}$ if and only if the Hardy-Littlewood maximal function $M$ is bounded on $L^p(w)$ whenever $p\in(1,\infty)$; and for $p=1$, $w\in A_{1}$ if and only if $M$  maps continuously $L^1(w)$ into $L^{1,\infty}(w)$. 

A modern treatment of extrapolation can be found in \cite{CMP} where it is shown that operators play no role in extrapolation, indeed one can reformulate it in  terms of pairs of functions. To elaborate on this, let $\mathcal{F}$ be some family of pairs $(f,g)$ of non-negative measurable functions. Rubio de Francia's extrapolation theorem can  then be rewritten as follows: if 
\begin{equation}\label{eq:intro-RdF-1}
\|f\|_{L^{p_0}(w)}\lesssim \|g\|_{L^{p_0}(w)}, \quad\text{for some }p_0\in[1,\infty),\text{ every }w\in A_{p_0},
\text{ and every }(f,g)\in\mathcal{F},
\end{equation}
then
\begin{equation}\label{eq:intro-RdF-2}
\|f\|_{L^{p}(w)}\lesssim \|g\|_{L^{p}(w)}, \quad\text{for every }p\in(1,\infty),\text{ every }w\in A_{p},
\text{ and every }(f,g)\in\mathcal{F}.
\end{equation}
This formulation is very useful and has profound consequences. For example, suppose that \eqref{eq:intro-RdF-1} holds (hence so does \eqref{eq:intro-RdF-2}) and we wish to show the following family of vector-valued inequalities:
\begin{equation}\label{eq:ewreavce}
\Big\|\Big(\sum_j f_j^r\Big)^{\frac1r}\Big\|_{L^q(w)}
\lesssim
\Big\|\Big(\sum_j g_j^r\Big)^{\frac1r}\Big\|_{L^q(w)}
,\quad \{(f_j,g_j)\}_j\subset\mathcal{F},
\end{equation}
with $1<q,r<\infty$ and $w\in A_q$. Fixing $r\in (1,\infty)$, there is some choice of $q$ for which this estimate follows easily. Indeed, if $q=r$, we can interchange the sum and the integral to obtain that for any $w\in A_r$, \eqref{eq:intro-RdF-2} (applied to each $(f_j,g_j)$ with $p=r$ and $w\in A_r$) yields
\begin{equation}\label{eq:wregrsbrtg}
\Big\|\Big(\sum_j f_j^r\Big)^{\frac1r}\Big\|_{L^r(w)}^r
=
\sum_j \|f_j\|_{L^r(w)}^r
\lesssim
\sum_j \|g_j\|_{L^r(w)}^r
=
\Big\|\Big(\sum_j g_j^r\Big)^{\frac1r}\Big\|_{L^r(w)}^r
\end{equation}
Knowing that the desired family of estimate holds with some particular exponent, $q=r$, we want to invoke Rubio de Francia's extrapolation theorem with a new family $\mathcal{F}_r$ consisting of the pairs $(F,G)$, where
\[
F=\Big(\sum_j f_j^r\Big)^{\frac1r}, \qquad
G=\Big(\sum_j g_j^r\Big)^{\frac1r},
\]
and $\{(f_j,g_j)\}_j\subset\mathcal{F}$. In this way, \eqref{eq:wregrsbrtg} can be written as 
\begin{equation}\label{eq:ewreavce:new}
\|F\|_{L^r(w)}
\lesssim
\|G\|_{L^r(w)}
,\quad \text{for every }w\in A_r \text{ and }(F,G)\in\mathcal{F}_r.
\end{equation}
Therefore, we can extrapolate and show that 
\[
\|F\|_{L^q(w)}
\lesssim
\|G\|_{L^q(w)}
,\quad \text{for every }q\in(1,\infty), \text{ every }w\in A_q \text{ and }(F,G)\in\mathcal{F}_r.
\]
That is, \eqref{eq:ewreavce} holds as desired.

The previous application of Rubio de Francia's extrapolation theorem shows that, for one reason or another, sometimes there is a natural choice of an exponent for which the  desired estimate turns out to be simpler. The goal of this paper is to show that this applies to the case of weighted tent spaces which are introduced next. Given $0<r,p<\infty$ and $w\in A_\infty$ we say that $F\in L^r_\loc(\ree_+)$ belongs to $\mathscr{T}_r^q(w)$ provided
\[
\|F\|_{\mathscr{T}_r^p(w)}
:=
\|\mathcal{A}_r F\|_{L^p(w)}
:
=
\Big\|\Big( \iint_{|\,\cdot\,-y|<t} |F(y,t)|^r\,\frac{dy\,dt}{t^{n+1}}\Big)^{\frac1r}
\Big\|_{L^p(w)}<\infty.
\]
Much as in the case of the vector-valued inequalities estimates considered above, the choice $p=r$ allows us to use Fubini's theorem to rearrange the different integrals. This is the basic idea to obtain the main result of this paper which is formulated next. We also prove results in the weighted Lorentz tent spaces
$\mathscr{T}_r^{q,s}(w)$, with $0<r,p, s<\infty$ and $w\in A_\infty$, defined using the quasi-norm
\[
\|F\|_{\mathscr{T}_r^{p,s}(w)}
:=
\|\mathcal{A}_r F\|_{L^{p,s}(w)}
=
\Big\|\Big( \iint_{|\,\cdot\,-y|<t} |F(y,t)|^r\,\frac{dy\,dt}{t^{n+1}}\Big)^{\frac1r}
\Big\|_{L^{p,s}(w)}.
\]

\begin{theorem}\label{theor:extrapol}
Let $\mathcal{F}$ be a family of pairs of measurable functions $(F, G)$, with  $F, G:\ree_+\to [0,\infty]$, and so that $F(\cdot,t)$, $G(\cdot,t)$ are measurable for each $t\in (0,\infty)$. For any $t\in(0,\infty)$ introduce $\mathcal{F}_t$, the family of pairs $(F(\cdot,t),G(\cdot,t))$ with $(F,G)\in \mathcal{F}$. Let $\varphi:[1,\infty)\to [1,\infty)$ be a non-decreasing function.
\begin{list}{\textup{(\theenumi)}}{\usecounter{enumi}\leftmargin=.8cm \labelwidth=.8cm \itemsep=0.2cm \topsep=.2cm \renewcommand{\theenumi}{\roman{enumi}}}
	
\item Assume that there exists $p_0\in [1,\infty)$ such that for every $w\in A_{p_0}$ there holds
\begin{equation}\label{extrapol-hyp}
	\|F(\cdot,t)\|_{L^{p_0}(w)}
\le
\varphi([w]_{A_{p_0}})\,\|G(\cdot,t)\|_{L^{p_0}(w)},
\qquad (F(\cdot,t),G(\cdot,t))\in\mathcal{F}_t,\ t\in (0,\infty).
\end{equation}
For every $r,p\in (1,\infty)$ there exists a non-decreasing function $\psi:[1,\infty)\to [1,\infty)$ such that 
\begin{equation}\label{extrapol-conc}
\|F\|_{\mathscr{T}_r^p(w)}
\le
\psi([w]_{A_p})\,\| G\|_{\mathscr{T}_r^p(w)},
\qquad (F,G)\in\mathcal{F},
\end{equation}
for every $w\in A_{p}$.

\item Assume that there exists $p_0\in (0,\infty)$ such that for every $w\in A_{\infty}$ there holds
\[
\|F(\cdot,t)\|_{L^{p_0}(w)}
\le
\varphi([w]_{A_\infty})\,\|G(\cdot,t)\|_{L^{p_0}(w)},
\qquad (F(\cdot,t),G(\cdot,t))\in\mathcal{F}_t,\ t\in (0,\infty).
\]
For every $r,p,s\in (0,\infty)$ there exists a non-decreasing function $\psi:[1,\infty)\to [1,\infty)$ such that
\[
\|F\|_{\mathscr{T}_r^p(w)}
\le
\psi([w]_{A_\infty})\,\|G\|_{\mathscr{T}_r^p(w)},
\qquad (F,G)\in\mathcal{F},
\]	
and
\[
\|F\|_{\mathscr{T}_r^{p,s}(w)}
\le
\psi([w]_{A_\infty})\,\|G\|_{\mathscr{T}_r^{p,s}(w)},
\qquad (F,G)\in\mathcal{F},
\]	
for every $w\in A_{\infty}$.

\item Assume that there exist $0<p_-<p_+< \infty$, and $p_0\in [p_-,p_+]$,  such that for every $w\in A_{\frac{p_0}{p_-}}\cap RH_{(\frac{p_+}{p_0})'}$ there holds
\[
\|F(\cdot,t)\|_{L^{p_{0}}(w)}
\le
\varphi([w]_{A_{\frac{p_0}{p_-}}}+[w]_{RH_{(\frac{p_+}{p_0})'}})\, \|G(\cdot,t)\|_{L^{p_0}(w)},
\qquad (F(\cdot,t),G(\cdot,t))\in\mathcal{F}_t,\ t\in (0,\infty).
\]
For every $r,p\in (p_-,p_+)$ there exists a non-decreasing function $\psi:[1,\infty)\to [1,\infty)$ such that 
\[
\|F\|_{\mathscr{T}_r^p(w)}
\le
\psi([w]_{A_{\frac{p}{p_-}}}+[w]_{RH_{(\frac{p_+}{p})'}})\,\|G\|_{\mathscr{T}_r^p(w)},
\qquad (F,G)\in\mathcal{F},
\]
for every $w\in A_{\frac{p}{p_-}}\cap RH_{(\frac{p_+}{p})'}$.

\item  Assume that there exist $1\le p_0\le q_0<\infty $, such that for every $w\in A_{p_0,q_0}$ there holds
\[
\|F(\cdot,t)\|_{L^{q_0}(w^{q_0})}
\le
\varphi([w]_{A_{p_0,q_0}})\, \|G(\cdot,t)\|_{L^{p_0}(w^{p_0})},
\qquad (F(\cdot,t),G(\cdot,t))\in\mathcal{F}_t,\ t\in (0,\infty).
\]
For every $p$ and $q$ such that $1< p\le q<\infty$ and $\frac1p-\frac1q=\frac1{p_0}-\frac1{q_0}$, and for every $r\in (1,\infty)$
there exists a non-decreasing function $\psi:[1,\infty)\to [1,\infty)$ such that 
\[
\|F\|_{\mathscr{T}_r^q(w^q)}
\le
\psi([w]_{A_{p,q}})\,\|G\|_{\mathscr{T}_r^p(w^p)},
\qquad (F,G)\in\mathcal{F},
\]
for every $w\in A_{p,q}$.

\end{list}
\end{theorem}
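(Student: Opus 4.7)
The plan is to execute, for each of the four cases, the two-step strategy already sketched in the introduction: first establish the tent-space estimate at the privileged exponent $p=r$ by means of Fubini, then lift it to every $p$ in the target range by a further application of Rubio de Francia extrapolation.

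\emph{Step 1 (pointwise-in-$t$ extrapolation).} For each fixed $t\in(0,\infty)$, the pair $(F(\cdot,t),G(\cdot,t))\in\mathcal{F}_t$ satisfies the posited weighted inequality. Applying the appropriate flavor of Rubio de Francia's theorem---the classical one for (i), the Cruz-Uribe--Martell--P\'erez $A_\infty$ version for (ii), the Auscher--Martell limited range version for (iii), and the off-diagonal version for (iv)---to each $\mathcal{F}_t$, I enlarge the hypothesis to a full family of weighted inequalities in the exponents and weights predicted by that extrapolation, with constants depending only on the relevant weight characteristics and therefore uniform in $t$.

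\emph{Step 2 (the case $p=r$).} Fix $r$. For cases (i)--(iii), Fubini gives the identity
\[
\|F\|_{\mathscr{T}_r^r(w)}^r
=
\int_{\re^n}(\mathcal{A}_r F)^r\,w
=
\iint_{\ree_+}|F(y,t)|^r\,v_t(y)\,\frac{dy\,dt}{t},
\qquad v_t(y):=\frac{w(B(y,t))}{t^n}.
\]
The essential technical input is that $v_t$, which up to a constant equals the convolution $w*\phi_t$ with $\phi_t=|B(0,t)|^{-1}\chi_{B(0,t)}$, lies in the same Muckenhoupt class as $w$ with constant bounded uniformly in $t>0$. Combining this uniform bound with the pointwise-in-$t$ inequality from Step 1, taken at exponent $r$ and weight $v_t$, and then integrating in $t$, yields $\|F\|_{\mathscr{T}_r^r(w)}\lesssim\|G\|_{\mathscr{T}_r^r(w)}$. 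For (iv), since the target inequality is genuinely off-diagonal when $p_0<q_0$, I substitute this diagonal computation with the off-diagonal analogue on the family $\{(\mathcal{A}_rF,\mathcal{A}_rG)\}$ starting from the diagonal sub-case, which is already covered by (iii) via the identification $w\in A_{p,q}\iff w^q\in A_{1+q/p'}$.

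\emph{Step 3 (extrapolating on the outer exponent).} The family of pairs $\{(\mathcal{A}_rF,\mathcal{A}_rG):(F,G)\in\mathcal{F}\}$ now satisfies a weighted inequality at a single exponent ($p=r$ in (i)--(iii), and at $(p_0,q_0)$ in (iv)). Invoking the appropriate Rubio de Francia theorem once more, applied to this \emph{new} family, yields the claimed tent-space estimate for every $p$ in the target range. The Lorentz statement in (ii) is obtained by substituting the version of $A_\infty$ extrapolation valid on the Lorentz scales $L^{p,s}$ at this final step.

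\emph{Main obstacle.} The substance of the argument lies in Step 2, and specifically in the uniform control of the weight constant of the averaged weight $v_t$. For $A_p$ and $A_\infty$ this is a classical property of averaging, but propagating uniform control through the reverse-H\"older and off-diagonal characterizations---needed to handle cases (iii) and (iv)---demands careful quantitative tracking of the weight constants through the duality and factorization identities that define those classes. This bookkeeping, rather than any single conceptual ingredient, is where most of the proof's effort will concentrate.
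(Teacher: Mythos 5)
Your strategy for parts (i)--(iii) is exactly the one the paper follows: average the weight to form $W_t(x)=\fint_{B(x,t)}w$, prove that $W_t$ lies in the same Muckenhoupt (or limited-range / $A_\infty$) class as $w$ with a constant independent of $t$ (this is the paper's Lemma \ref{lemma:aver_w}, extended to the other classes as needed), use the Fubini identity $\|\mathcal A_rF\|_{L^r(w)}^r=v_n\int_0^\infty\|F(\cdot,t)\|_{L^r(W_t)}^r\,\tfrac{dt}{t}$ at the privileged exponent $p=r$, and then re-extrapolate on the family $\{(\mathcal A_rF,\mathcal A_rG)\}$. The one remark worth making is that the pointwise-in-$t$ extrapolation in your Step 1 can be done for the whole family $\bigcup_t\mathcal F_t$ at once rather than one $t$ at a time, since the hypothesis is uniform in $t$; this is cosmetic.

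Part (iv) is where the proposal has a genuine gap. You propose to ``substitute the diagonal computation with the off-diagonal analogue [\ldots] starting from the diagonal sub-case, which is already covered by (iii).'' Two things go wrong here. First, off-diagonal ($A_{p,q}$) extrapolation preserves the difference $\tfrac1p-\tfrac1q$; it cannot manufacture an off-diagonal conclusion ($\tfrac1p-\tfrac1q=\tfrac1{p_0}-\tfrac1{q_0}>0$) from a diagonal estimate ($\tfrac1p-\tfrac1q=0$), so no appeal to (iii) (nor to (i), which is what the diagonal sub-case of (iv) actually reduces to when $p_0=q_0$) gets you started. Second, when $p_0<q_0$ there is no single exponent at which a bare Fubini computation closes the loop: the input exponent is $p$ and the output exponent is $q$ with $p<q$, so neither can be made equal to $r$. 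What is actually needed---and what the paper does---is to choose $p_1\le r\le q_1$ on the same line $\tfrac1{p_1}-\tfrac1{q_1}=\tfrac1{p_0}-\tfrac1{q_0}$ (possible precisely because $r>1$ and $p,q$ range over the whole admissible line, explaining the hypotheses), then control $\|\mathcal A_rF\|_{L^{q_1}(w^{q_1})}$ by using Minkowski's integral inequality with exponent $q_1/r\ge 1$ to move the $t$-integral outside, Jensen's inequality to pass from an $r$-average over $B(\cdot,t)$ to a $q_1$-average, the hypothesis at $(p_1,q_1)$ with the averaged weight $\widetilde W_t$, and then Minkowski again with exponent $r/p_1\ge 1$ and Jensen to return to $\|\mathcal A_rG\|_{L^{p_1}(w^{p_1})}$. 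This double-Minkowski/Jensen device is the substantive new ingredient in (iv), and it is not captured by the phrase ``off-diagonal analogue of the Fubini step.'' You also need a version of the weight-averaging lemma adapted to $A_{p_1,q_1}$ (the paper establishes both $[\widetilde W_t]_{A_{p_1,q_1}}\lesssim[w]_{A_{p_1,q_1}}$ and the pointwise comparison $\widetilde W_t^{p_1}\lesssim [w]_{A_{p_1,q_1}}^{p_1}\fint_{B(\cdot,t)}w^{p_1}$, the latter coming from $A_{p_1,q_1}$ and Jensen), which is more than the plain $A_p$ statement you cite. So your ``main obstacle'' paragraph correctly anticipates that the weight bookkeeping is where the work lies, but for (iv) the missing idea is structural (the sandwich $p_1\le r\le q_1$ plus Minkowski/Jensen) and not mere bookkeeping.
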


In many applications, the function $F$ is of the form $F(t,\cdot) = T_{t}(G(t,.))$ for some family of sublinear operators $(T_{t})_{t >0}$.  Even when, for all $t>0$, $T_{t}=T$ for a fixed operator $T$ bounded on $L^{p}(\re^n)$, the estimate
\begin{equation}
\label{eq:mult}
\Big\|\Big( \iint_{|\,\cdot\,-y|<t} |T(G(t,\cdot))(y)|^r\,\frac{dy\,dt}{t^{n+1}}\Big)^{\frac1r}
\Big\|_{L^p}
\lesssim
\Big\|\Big( \iint_{|\,\cdot\,-y|<t} |G(y,t)|^r\,\frac{dy\,dt}{t^{n+1}}\Big)^{\frac1r}
\Big\|_{L^p},
%
%\big\|(t,x)\mapsto T_{t}G(t,\cdot)(x)\big\|_{\mathscr{T}_r^p}
%\lesssim \|G\|_{\mathscr{T}_r^p}
%\qquad \forall G \in \mathscr{T}_r^p,
\end{equation}
is non-trivial. This is somewhat surprising, given that the analogous estimate 
on the vertical square function space $L^{p}(\R^{n};L^{2}(\R_{+},\frac{dt}{t}))$, rather than the conical square function space $\mathscr{T}_2^p$, holds for simple reasons (e.g., by the classical result of Marcinkiewicz and Zygmund \cite{MZ}). 
To prove \eqref{eq:mult}, or its more general version with the family $(T_{t})_{t >0}$, one can use a (different) extrapolation method based on off-diagonal estimates of sufficiently high order decay, systemically developed in \cite{AKMP} (see Section \ref{section:od}). Unfortunately, some natural operators such as the Riesz transforms do not have enough off-diagonal decay and one can use weighted extrapolation instead, as first demonstrated in \cite{AP}. 

Theorem \ref{theor:extrapol} improves the approach developed in \cite{AP} by proving a general result that extrapolates inequalities, rather than just operators, in the spirit of \cite{CMP}. This adds flexibility, and allows us to add weights to the tent space norms, simplify proofs, and remove some unnecessary restrictions (see the applications to fractional operators in Section \ref{section:app}). We also prove, in Section \ref{section:od}, an extrapolation result in weighted tent spaces under an off-diagonal estimate assumption, and clarify the relationship between extrapolation results using off-diagonal assumptions and those using weighted estimates as a hypothesis.

Our approach is based on Lemma \ref{lemma:aver_w}, which describes how the averaging operators used to define tent spaces interact with weights. This is more flexible than the approach used in \cite{AP}, which is based on an understanding of how the averaging operators interact with the operators one wants to extrapolate (see \cite[Lemma 4.1, Lemma 5.4]{AP}).

Given Lemma \ref{lemma:aver_w}, our assumptions then translate into an estimate in some weighted Lebesgue spaces for some particular exponent and for the pairs $(\mathcal{A}_r F, \mathcal{A}_r G)$, with $(F,G)\in \mathcal{F}$. For the sake of specificity let us sketch the proof of (i). Fix $r\in(1,\infty)$. We will show that Fubini's theorem implies that for every $w\in A_r$ one has
\[
\|\mathcal{A}_r F\|_{L^r(w)}
\lesssim
\|\mathcal{A}_r G\|_{L^r(w)}, \qquad (F,G)\in\mathcal{F}.
\]
If we then define the family $\widetilde{\mathcal{F}}_r$ consisting of the pairs $(\mathcal{A}_r F, \mathcal{A}_r G)$ with $(F,G)\in\mathcal{F}$, we can extrapolate from the exponent $r$ to obtain estimates for $\widetilde{\mathcal{F}}_r$ in $L^p(w)$ for every $p\in(1,\infty)$ and for all $w\in A_p$. These are, in turn, the desired estimates for the pairs of the family $\mathcal{F}$ in $\mathscr{T}_r^p(w)$.

As just shown for (i), one can relate the initial estimates with some weighted norm inequality for the pairs $(\mathcal{A}_r F, \mathcal{A}_r G)$ with $(F,G)\in\mathcal{F}$. This means that we can invoke some other known extrapolation results and obtain other estimates. For instance, we can consider tent spaces associated to Banach functions space and in the context of (i) show that 
\[\|\mathcal{A}_r F\|_{\mathbb{X}(w)}
\lesssim
\|\mathcal{A}_r G\|_{\mathbb{X}(w)}
\]
with $\mathbb{X}$ being a rearrangement invariant Banach function space with Boyd indices $1<p_{\mathbb{X}}\le q_{\mathbb{X}}<\infty$ and $w\in A_{p_{\mathbb{X}}}$ (see \cite[Theorem~4.6]{CMP}). 
Examples to which this applies are $L^{p,q}$, $1<p<\infty$, $1\le q\le\infty$; or $L^p(\log L)^\alpha$ with $1<p<\infty$ and $\alpha\in\re$. In both cases $p_{\mathbb{X}}=q_{\mathbb{X}}=p$ and hence the weights should be in $A_p$. Other examples are $\mathbb{X}=L^p\cap L^q$ or $L^p+L^q$ with $1<p<q<\infty$, where $p_{\mathbb{X}}=p$ and $q_{\mathbb{X}}=q$, and the weights are assumed to be in $A_p$. 

More generally we can introduce some different weights inside the previous norms and obtain
\[
\|(\mathcal{A}_r F)\,u\|_{\mathbb{X}(w)}
\lesssim
\|(\mathcal{A}_r G)\,u\|_{\mathbb{X}(w)}
\]
with $\mathbb{X}$ being a rearrangement invariant Banach function space with Boyd indices $1<p_{\mathbb{X}}\le q_{\mathbb{X}}<\infty$, $u^{p_{\mathbb{X}}}\,w\in A_{p_{\mathbb{X}}}$, $u^{q_{\mathbb{X}}}\,w\in A_{q_{\mathbb{X}}}$, and $w\in A_\infty$ (see \cite[Theorem~3.1, Example~2.46]{CMM}). For instance, if $\mathbb{X}=L^{p,q}$, $1<p<\infty$, $1\le q\le\infty$; or $L^p(\log L)^\alpha$ with $1<p<\infty$ and $\alpha\in\re$
the conditions on the weights reduce to $u^{p}\,w\in A_{p}$ and $w\in A_\infty$. As before, for $\mathbb{X}=L^p\cap L^q$ or $L^p+L^q$ with $1<p<q<\infty$, the weights should satisfy $u^p\,w\in A_p$, $u^q\,w\in A_{q}$, and $w\in A_\infty$. 

We can also consider Banach function spaces which are not necessarily rearrangement invariant. Indeed, if  $\mathbb{X}$ is Banach function space so that the Hardy-Littlewood maximal function is bounded both on $\mathbb{X}$ and 
its associate space $\mathbb{X}'$ (cf. \cite[Theorem~8.2]{MMMMM}, \cite[Theorem~3.1]{CMM}), then we also get that  $\|\mathcal{A}_r F\|_{\mathbb{X}}
\lesssim \|\mathcal{A}_r G\|_{\mathbb{X}}$. That is the case of variable Lebesgue spaces $\mathbb{X}=L^{p(\cdot)}$ where the variable exponent $p(\cdot)$ satisfies the so-called log-Hölder condition (see \cite{CMM} and the references therein). One could also obtain estimates of the form $\|(\mathcal{A}_r F)\,u\|_{\mathbb{X}}
\lesssim \|(\mathcal{A}_r G)\,u\|_{\mathbb{X}}$ provided the Hardy-Littlewood maximal function and some appropriate dual operator satisfies similar inequalities (see \cite[Theorem~3.1]{CMM}). As a consequence,$\|(\mathcal{A}_r F)\,u\|_{L^{p(\cdot)}} \lesssim \|(\mathcal{A}_r G)\,u\|_{L^{p(\cdot)}}$ holds for variable exponents $p(\cdot)$ satisfying the log-Hölder condition and for all $u\in A_{p(\cdot)}$ (see \cite[Example~2.39]{CMM}).

 More general weighted Banach functions spaces are allowed, see \cite[Theorem~3.1]{CMM}, or even estimates in modular spaces, see \cite[Section~4.3]{CMP} or \cite[Theorem~4.1]{CMM}. The precise statements are left to the interested reader.

The plan of the paper is as follows. We present some applications of the main result in the next section:  Corollary~\ref{corol:main}  is a general result which is applied to Calder\'on-Zygmund operators, operators associated with the Kato conjecture, and fractional operators. Notation is fixed in Section \ref{section:prelim}, before we turn to the proofs in Section \ref{section:proof}. Finally, in Section \ref{section:od}, we consider the interaction between extrapolation results on tent spaces based on off-diagonal bounds, and those based on weighted estimates.

\section{Applications}
 \label{section:app}

We denote by $M$ the (uncentered) Hardy-Littlewood maximal function and by $\mathcal{M}$ its corresponding extension to functions defined in $\ree_+$. That is, if $F:\ree_+\to\re$ with $F(\cdot,t)\in L^1_\loc(\re^n)$ we set 
\[
\mathcal{M}(F)(x,t)
:=
M(F(\cdot,t))(x),
\qquad 
(x,t)\in\ree_+.
\]
Analogously, for an operator $T$ on $\re^n$ defined on some family of measurable functions we define its extension
\begin{equation}\label{ext-T}
	\mathcal{T}(F)(x,t)
:=
T(F(\cdot,t))(x),
\qquad 
(x,t)\in\ree_+,
\end{equation}
provided $F(\cdot,t)$ belongs to the domain of $T$.

The first consequence of Theorem~\ref{theor:extrapol} is that we can extend \cite[Theorem~2.1]{AP} to consider the boundedness of the Hardy-Littlewood maximal function in weighted tent spaces.

\begin{corollary}\label{corol:M-tent}
Given $1<r<\infty$, $\mathcal{M}$ is bounded on $\mathscr{T}_r^p(w)$ for all $1<p<\infty$ and all $w\in A_p$, and bounded from $\mathscr{T}_r^1(w)$ to $\mathscr{T}_r^{1,\infty}(w)$ for all $w\in A_1$.
\end{corollary}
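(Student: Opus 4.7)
The plan is to apply Theorem~\ref{theor:extrapol}(i) to the family of pairs $\mathcal{F}=\{(\mathcal{M}F,F)\}$, where $F:\ree_+\to[0,\infty]$ is measurable with $F(\cdot,t)\in L^1_\loc(\re^n)$ for each $t\in(0,\infty)$, so that $\mathcal{F}_t=\{(M(F(\cdot,t)),F(\cdot,t))\}$. Fix $p_0=2$ (any $p_0\in(1,\infty)$ would serve equally well). By Muckenhoupt's theorem, for every $w\in A_2$ and every $t\in(0,\infty)$,
\begin{equation*}
\|M(F(\cdot,t))\|_{L^2(w)}\le \varphi([w]_{A_2})\,\|F(\cdot,t)\|_{L^2(w)},
\end{equation*}
with some non-decreasing $\varphi:[1,\infty)\to[1,\infty)$ (indeed, Buckley's sharp bound allows $\varphi(s)\lesssim s$). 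This is precisely hypothesis \eqref{extrapol-hyp}, so the conclusion \eqref{extrapol-conc} yields
\begin{equation*}
\|\mathcal{M}F\|_{\mathscr{T}_r^p(w)}\le \psi([w]_{A_p})\,\|F\|_{\mathscr{T}_r^p(w)}
\end{equation*}
for every $r,p\in(1,\infty)$ and every $w\in A_p$, which is the strong-type half of the corollary.

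For the endpoint $\mathcal{M}:\mathscr{T}_r^1(w)\to\mathscr{T}_r^{1,\infty}(w)$ with $w\in A_1$, I would introduce the auxiliary family of pairs $\widetilde{\mathcal{F}}_r:=\{(\mathcal{A}_r(\mathcal{M}F),\mathcal{A}_r F):F\text{ as above}\}$. The strong-type estimate just obtained rephrases as
\begin{equation*}
\|\mathcal{A}_r(\mathcal{M}F)\|_{L^p(w)}\le \psi([w]_{A_p})\,\|\mathcal{A}_r F\|_{L^p(w)},\qquad w\in A_p,\ p\in(1,\infty),
\end{equation*}
and I would then invoke the weak-type endpoint version of Rubio de Francia's extrapolation for pairs from \cite{CMP}, which self-improves such a strong-type bound at some $p_0>1$ with $A_{p_0}$ weights into the weak-type inequality
\begin{equation*}
\|\mathcal{A}_r(\mathcal{M}F)\|_{L^{1,\infty}(w)}\le C([w]_{A_1})\,\|\mathcal{A}_r F\|_{L^1(w)},\qquad w\in A_1.
\end{equation*}
This is exactly the asserted bound $\|\mathcal{M}F\|_{\mathscr{T}_r^{1,\infty}(w)}\le C([w]_{A_1})\|F\|_{\mathscr{T}_r^1(w)}$.

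The main obstacle is the weak-type endpoint. Theorem~\ref{theor:extrapol}(i) only produces strong-type conclusions in $(1,\infty)$, and part (ii) is not applicable since $M$ is not bounded on $L^{p_0}(w)$ for arbitrary $w\in A_\infty$ with a constant depending only on $[w]_{A_\infty}$ (one would need $w\in A_{p_0}$). Appealing to the pair-based weak-type extrapolation from \cite{CMP} sidesteps this cleanly and keeps the argument within the abstract extrapolation philosophy of the paper; alternatively, one could run a direct Calder\'on--Zygmund decomposition argument on $\mathcal{A}_r F$ adapted to the tent-space geometry, but this is considerably more laborious.
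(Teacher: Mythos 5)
Your strong-type argument for $1<p<\infty$ is exactly the paper's: apply Theorem~\ref{theor:extrapol}(i) with $p_0=2$ to the family of pairs $(\mathcal{M}F,|F|)$, using Buckley's $A_2$ bound for $M$. No issues there.

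The endpoint step, however, contains a genuine gap. There is no ``weak-type endpoint version of Rubio de Francia's extrapolation for pairs'' that upgrades a strong-type $L^{p_0}(w)$ inequality valid for all $w\in A_{p_0}$ (some $p_0>1$) into a weak $(1,1)$ inequality for all $w\in A_1$. Such an implication is simply false: the iterated maximal operator $M\circ M$ is bounded on $L^p(w)$ for every $1<p<\infty$ and every $w\in A_p$, yet it is not of weak type $(1,1)$ (even unweighted), since applying it to $\chi_{B(0,1)}$ produces a function of size $\gtrsim |x|^{-n}\log|x|$, whose distribution function exceeds $C/\lambda$. The weak-type extrapolation results in \cite{CMP} (e.g.\ their Corollary~3.12) take a \emph{weak-type} hypothesis at some $p_0\ge 1$ and produce weak-type conclusions for $p\in(1,\infty)$; they do not reach $p=1$, and they certainly do not convert strong into weak at the endpoint. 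The paper itself flags this: ``The case $p=1$ does not follow by extrapolation.''

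The paper's actual endpoint argument is not an extrapolation at all. It invokes \cite[Lemma~4.1]{AP}, which gives the pointwise estimate
\[
\Big(\fint_{B(x,t)} (Mf)^r\,dy\Big)^{1/r}
\lesssim
\Big(\fint_{B(x,2t)} |f|^r\,dy\Big)^{1/r}
+
M\Big(\fint_{B(\cdot,t)}|f(z)|\,dz\Big)(x),
\]
and deduces $\mathcal{A}_r(\mathcal{M}G)\lesssim \mathcal{A}_r^2 G+\big(\int_0^\infty \mathcal{M}(\widetilde G)^r\,dt/t\big)^{1/r}$, where $\widetilde G(x,t)=\fint_{B(x,t)}|G(z,t)|\,dz$ and $\mathcal{A}_r^2$ is the cone-averaging operator with aperture $2$. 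The first term is handled by the change-of-angle formula in weighted $L^1$; the second by the weighted Fefferman--Stein vector-valued weak $(1,1)$ estimate for $M$ from \cite{RRT} together with Jensen. You would need to replace your extrapolation step with an argument of this kind (or an explicit Calder\'on--Zygmund decomposition adapted to the cone geometry, as you mention as an alternative).
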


The proof of the case $1<p<\infty$ is particularly easy. Let $\mathcal{F}$ be the family of pairs $(\mathcal{M}(G),|G|)$, where $G:\ree_+ \to\re$ with $G(\cdot,t)\in L^1_\loc(\re)$. Note that for every $t>0$ and for every $w\in A_2$ by \cite{Buckley}
\[
\|\mathcal{M}(G)(\cdot,t)\|_{L^2(w)}
=
\|M(G(\cdot,t))\|_{L^2(w)}
\lesssim
[w]_{A_2}\,\|G(\cdot,t)\|_{L^2(w)}.
\]
We can then invoke Theorem~\ref{theor:extrapol} part (i) with $p_0=2$ to obtain for every $p,r\in (1,\infty)$ and every $w\in A_p$ 
\[
\|\mathcal{M}(G)\|_{\mathscr{T}_r^p(w)}
\lesssim
\|G\|_{\mathscr{T}_r^p(w)}.
\]
The case $p=1$ does not follow by extrapolation and its proof is postponed until the end of Section \ref{section:proof}. 

Using the very same ideas, Theorem~\ref{theor:extrapol} and Corollary~\ref{corol:M-tent} have the following immediate consequence. The proof is easy and left to the interested reader.

\begin{corollary}\label{corol:main} \null\ \null
\begin{list}{\textup{(\theenumi)}}{\usecounter{enumi}\leftmargin=.8cm \labelwidth=.8cm \itemsep=0.2cm \topsep=.2cm \renewcommand{\theenumi}{\alph{enumi}}}

\item Assume that $T$ is an operator satisfying 
	\(
	\|T f\|_{L^{p_0}(w)}\lesssim \|f\|_{L^{p_0}(w)},
	\)
	for some $p_0\in (1,\infty)$ and for all $w\in A_{p_0}$ (equivalently, for all $p\in (1,\infty)$ and all $w\in A_p$). 
	Then, $\mathcal{T}$ is bounded in $\mathscr{T}_r^p(w)$ for all $p,r\in (1,\infty)$ and all $w\in A_p$, that is, 
	\[
	\|\mathcal{T}(F)\|_{\mathscr{T}_r^p(w)}	\lesssim\|F\|_{\mathscr{T}_r^p(w)}.
	\]

\item Assume that $T$ is an operator satisfying 
\(
\|T f\|_{L^{p_0}(w)}\lesssim \|M f\|_{L^{p_0}(w)},
\)
for some $p_0\in (0,\infty)$ and for all $w\in A_{\infty}$ (equivalently, for all $p\in (0,\infty)$ and all $w\in A_\infty$). 
Then, for all $p,r,s\in (0,\infty)$ and all $w\in A_\infty$, there hold
\[
\|\mathcal{T}(F)\|_{\mathscr{T}_r^p(w)}	\lesssim\|\mathcal{M}(F)\|_{\mathscr{T}_r^p(w)},
\qquad
\|\mathcal{T}(F)\|_{\mathscr{T}_r^{p,s}(w)}	\lesssim\|\mathcal{M}(F)\|_{\mathscr{T}_r^{p,s}(w)}.
\]
As a consequence, $\mathcal{T}$ is bounded in $\mathscr{T}_r^p(w)$ for all $p,r\in (1,\infty)$ and all $w\in A_p$, that is, 
\[
\|\mathcal{T}(F)\|_{\mathscr{T}_r^p(w)}
\lesssim
\|F\|_{\mathscr{T}_r^p(w)};
\]
and  $\mathcal{T}$ is bounded from $\mathscr{T}_r^1(w)$ to $\mathscr{T}_r^{1,\infty}(w)$  for all $r\in (1,\infty)$ and all $w\in A_1$, that is, 
\[
\|\mathcal{T}(F)\|_{\mathscr{T}_r^{1,\infty}(w)}
\lesssim
\|F\|_{\mathscr{T}_r^1(w)}.
\]

\item Assume that there exist $0< p_-<p_+< \infty$ and that $T$ is an operator satisfying 
\(
\|T f\|_{L^{p_0}(w)}\lesssim \|f\|_{L^{p_0}(w)},
\)
for some $p_0\in (p_-,p_+)$ and for all $w\in A_{{p_0}/{p_-}}\cap RH_{({p_+}/{p_0})'}$ (equivalently, for all $p\in (p_-,p_+)$ and all $w\in A_{{p}/{p_-}}\cap RH_{({p_+}/{p})'}$). 
Then, $\mathcal{T}$ is bounded in $\mathscr{T}_r^p(w)$ for all $p,r\in (p_-,p_+)$ and all $w\in A_{{p}/{p_-}}\cap RH_{({p_+}/{p})'}$, that is, 
\[
\|\mathcal{T}(F)\|_{\mathscr{T}_r^p(w)}	\lesssim\|F\|_{\mathscr{T}_r^p(w)}.
\]

\item  Assume that there exist $1< p_0\le q_0<\infty $ and that $T$ is an operator satisfying 
\(
\|T f\|_{L^{q_0}(w^{q_0})}\lesssim \|f\|_{L^{p_0}(w^{p_0})},
\)
for every $w\in A_{p_0,q_0}$ (equivalently, for all $1< p\le q<\infty$ with $\frac1p-\frac1q=\frac1{p_0}-\frac1{q_0}$ and all $w\in A_{p,q}$). Then, $\mathcal{T}$ is bounded from $\mathscr{T}_r^p(w^p)$ to $\mathscr{T}_r^q(w^q)$ for all 
$p$ and $q$ such that $1< p\le q<\infty$ and $\frac1p-\frac1q=\frac1{p_0}-\frac1{q_0}$, all $r\in (1,\infty)$, and all $w\in A_{p,q}$, that is, 
\[
\|\mathcal{T}(F)\|_{\mathscr{T}_r^q(w^q)}	\lesssim\|F\|_{\mathscr{T}_r^p(w^p)}.
\]
\end{list}
\end{corollary}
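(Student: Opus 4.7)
The argument is a routine reduction to Theorem~\ref{theor:extrapol}, based on the identity $\mathcal{T}(F)(\cdot,t) = T(F(\cdot,t))$ from \eqref{ext-T}: the scalar hypothesis on $T$ at the fixed exponent $p_0$, applied with $f = F(\cdot,t)$, supplies exactly the fixed-$t$ hypothesis of Theorem~\ref{theor:extrapol} for a suitably chosen family of pairs. For parts (a), (c), and (d), I apply the corresponding part of that theorem to the family
\[
\mathcal{F}_T := \{(\mathcal{T}(F),|F|) : F:\ree_+\to\re,\ F(\cdot,t)\in\mathrm{dom}(T)\ \text{for every}\ t>0\},
\]
and the conclusion follows immediately, without any further manipulation. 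The parenthetical equivalences in the hypotheses (all $p$ vs.\ some $p_0$) are invoked only to fix notation: only the $p_0$-estimate for $T$ is actually used as input.

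For part (b), I instead apply Theorem~\ref{theor:extrapol}(ii) to the family
\[
\widetilde{\mathcal{F}}_T := \{(\mathcal{T}(F),\mathcal{M}(F)) : F \text{ as above}\},
\]
whose fixed-$t$ hypothesis is exactly the scalar assumption $\|Tf\|_{L^{p_0}(w)}\lesssim\|Mf\|_{L^{p_0}(w)}$ for $w\in A_\infty$. This yields the two dominations by $\mathcal{M}(F)$ in $\mathscr{T}_r^p(w)$ and $\mathscr{T}_r^{p,s}(w)$ for all $p,s,r\in(0,\infty)$ and all $w\in A_\infty$, which is the first assertion of (b). Composing with the tent-space bounds for $\mathcal{M}$ supplied by Corollary~\ref{corol:M-tent} produces the remaining boundedness statements: for $1<p<\infty$ and $w\in A_p$ the strong-type bound $\|\mathcal{M}(F)\|_{\mathscr{T}_r^p(w)}\lesssim \|F\|_{\mathscr{T}_r^p(w)}$ closes the argument, while at the endpoint $p=1$, $w\in A_1$ the weak-type bound $\|\mathcal{M}(F)\|_{\mathscr{T}_r^{1,\infty}(w)}\lesssim \|F\|_{\mathscr{T}_r^1(w)}$ does the job, provided one first has the endpoint domination $\|\mathcal{T}(F)\|_{\mathscr{T}_r^{1,\infty}(w)}\lesssim \|\mathcal{M}(F)\|_{\mathscr{T}_r^{1,\infty}(w)}$.

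The main (very mild) obstacle is precisely this last endpoint domination, since Theorem~\ref{theor:extrapol}(ii) is formally stated only for $s\in(0,\infty)$ and so does not literally cover $s=\infty$. I expect this to be handled by inspecting the proof of Theorem~\ref{theor:extrapol}(ii) and observing that the same weighted scalar Rubio-de-Francia extrapolation used there delivers Lorentz-type quasi-norms for $s=\infty$ as well, since the weak weighted bound $\|\cdot\|_{L^{p,\infty}(w)}$ is the standard endpoint of the strong $\|\cdot\|_{L^{p,s}(w)}$ scale in the extrapolation machinery of \cite{CMP}. Beyond this minor technicality, the corollary is entirely formal given Theorem~\ref{theor:extrapol} and Corollary~\ref{corol:M-tent}, which is why the author leaves the proof to the interested reader.
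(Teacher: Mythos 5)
Your proposal is correct and follows exactly the route the paper intends but leaves to the reader: apply Theorem~\ref{theor:extrapol} to the family of pairs $\big(|\mathcal{T}(F)|,|F|\big)$ in parts (a), (c), (d), and to $\big(|\mathcal{T}(F)|,\mathcal{M}(F)\big)$ in part (b), then in (b) compose the resulting tent-space domination by $\mathcal{M}$ with the bounds for $\mathcal{M}$ from Corollary~\ref{corol:M-tent}. (Note that the first coordinate of the pairs must be $|\mathcal{T}(F)|$, not $\mathcal{T}(F)$, since Theorem~\ref{theor:extrapol} requires nonnegative functions; this is a trivial adjustment.)

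You also correctly single out the one genuine subtlety, namely the weak-type endpoint domination $\|\mathcal{T}(F)\|_{\mathscr{T}_r^{1,\infty}(w)}\lesssim\|\mathcal{M}(F)\|_{\mathscr{T}_r^{1,\infty}(w)}$ required to close the last assertion of (b), which is not literally covered by the statement of Theorem~\ref{theor:extrapol}(ii) since that is formulated for $s\in(0,\infty)$. Your proposed repair is the right one: in the proof of Theorem~\ref{theor:extrapol}(ii) the reduction via Lemma~\ref{lemma:aver_w} produces the fixed-exponent $A_\infty$ estimate for the pairs $\big(\mathcal{A}_r(|\mathcal{T}(F)|),\mathcal{A}_r(\mathcal{M}(F))\big)$, and the $A_\infty$ extrapolation machinery invoked there (\cite[Corollary~3.15]{CMP}, \cite[Theorem~2.1]{CMP:Ainfty}) does produce weighted Lorentz estimates all the way to $s=\infty$, so the chain $\|\mathcal{T}(F)\|_{\mathscr{T}_r^{1,\infty}(w)}\lesssim\|\mathcal{M}(F)\|_{\mathscr{T}_r^{1,\infty}(w)}\lesssim\|F\|_{\mathscr{T}_r^{1}(w)}$ for $w\in A_1$ closes exactly as you describe. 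No other ideas are needed.
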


In the previous result all the involved boundedness should hold with a constant that depends on the characteristic of the weights in terms on a non-decreasing function. 

Let us now present some relevant examples of operators to which the previous result applies:

\subsection*{Calderón-Zygmund operators and Singular Integral operators}
Any Calderón-Zygmund operator $T$, like the Hilbert or the Riesz transform, is bounded on $L^p(w)$ for every $p\in(1,\infty)$ and $w\in A_p$. It also satisfies Coifmann-Fefferman inequalities (cf.~\cite{Coi, CF}), that is, $T$ is  controlled by $M$ in $L^p(w)$ for every $p\in(0,\infty)$ and $w\in A_\infty$. As such we can apply Corollary~\ref{corol:main} parts (a) and (b) to obtain that $\mathcal{T}$ is bounded in $\mathscr{T}_r^p(w)$ for every $1<p,r<\infty$ and every $w\in A_p$ and is also bounded from $\mathscr{T}_r^1(w)$ to $\mathscr{T}_r^{1,\infty}(w)$ for every $1<r<\infty$ and every $w\in A_1$, this extends \cite[Theorem 2.2]{AP}. Moreover, $\mathcal{T}$ can be controlled by $\mathcal{M}$ in
$\mathscr{T}_r^{p}(w)$, and even in $\mathscr{T}_r^{p,s}(w)$, for every $0<p,r,s<\infty$ and $w\in A_\infty$. This can be generalized to other singular integral operators. For instance we can consider commutators $[T,b]$ with $T$ being a Calderón-Zygmund operator and $b\in\mathrm{BMO}$. It is well-known that these commutators are bounded in $L^p(w)$ for $1<p<\infty$ and $w\in A_p$ and they can be controlled by $M^2=M\circ M$ in $L^p(w)$ for every $p\in(0,\infty)$ and $w\in A_\infty$. We can therefore obtain that these estimates readily extend to weighed tent spaces, further details are left to the interested reader.

\subsection*{Operators associated with the Kato conjecture}
Let $A$ be an $n \times n$ matrix of complex and $L^{\infty}$-valued coefficients defined on $\re^n$. We assume that $A$ satisfies the following ellipticity condition: there exist $0<\lambda \le \Lambda<\infty$ such that
\[
\lambda |\xi|^2 \le \mathrm{Re}\, A(x) \xi \cdot \bar{\xi} \qquad\text{and}\qquad 
|A(x)\xi \cdot \bar{\eta}| \le \Lambda |\xi| |\eta|, 
\]
for all $\xi, \eta \in \C^n$ and almost every $x \in \re^n$. We have used the notation $\xi \cdot \eta=\sum_{j=1}^n \xi_j \eta_j$, and therefore $\xi \cdot \bar{\eta}$ is the usual inner product in $\C^n$. Note that then $A(x)\xi \cdot \bar{\eta}=\sum_{j, k}a_{j,k}(x) \xi_k \bar{\eta}_j$. Associated with this matrix we define the second order divergence form operator $Lu =-\div(A \nabla u)$, which is understood in the standard weak sense as a maximal-accretive operator on the space $L^2(\re^n)$ with domain $D(L)$  by means of a sesquilinear form. 

Associated with this operator we can consider the functional calculus $\varphi(L)$ where $\varphi$ is holomorphic and bounded in an appropriate sector, the Riesz transform $\nabla L^{-1/2}$, and some square functions. The $L^p$ theory for these operators was developed in the monograph \cite{Aus} and weighted norm inequalities were obtained in \cite{AM3} using a generalized Calderón-Zygmund theory from \cite{AM1}.  As in \cite{Aus} and \cite{AM2}, we denote by $(p_{-}(L), p_{+}(L))$, respectively  $(q_{-}(L), q_{+}(L))$, the maximal open interval on which the heat semigroup $(e^{-tL})_{t>0}$, respectively its gradient $(\sqrt{t}\nabla e^{-tL})_{t>0}$,  is uniformly bounded on $L^p(\re^n)$. It is obtained in \cite{Aus} that $p_-(L)=q_-(L)$ and $2<q_+(L)\le p_+(L)$. 

Recall the weighted norm inequalities from \cite{AM3}: 
\begin{align*}
	\|\varphi(L) f\|_{L^p(w)} \lesssim \|\varphi\|_{\infty} \|f\|_{L^p(w)},\quad \forall\,p \in (p_{-}(L), p_{+}(L)), \, \, 
	w \in A_{p/p_{-}(L)} \cap RH_{(p_{+}(L)/p)'};
\end{align*}
\begin{align*}
	\|\nabla L^{-1/2} f\|_{L^p(w)} \lesssim \|f\|_{L^p(w)}, \quad \forall\,p \in (q_{-}(L), q_{+}(L)), \, \, 
	w \in A_{p/q_{-}(L)} \cap RH_{(q_{+}(L)/p)'}; 
\end{align*}
and
\begin{align*}
	\|L^{1/2} f \|_{L^p(w)} \lesssim \|\nabla f\|_{L^p(w)},\quad \forall\,p \in (p_{-}(L), p_{+}(L)), \, \, 
	w \in A_{p/p_{-}(L)} \cap RH_{(p_{+}(L)/p)'}. 
\end{align*}
Using these estimates, and invoking Corollary~\ref{corol:main} part (c) we can easily obtain that $\varphi(L)$ can be extended as a bounded operator in $\mathscr{T}_r^p(w)$ for every $p,r \in (p_{-}(L), p_{+}(L))$ and $w \in A_{p/p_{-}(L)} \cap RH_{(p_{+}(L)/p)'}$. In the case of the Riesz transform $\nabla L^{-1/2}$ we get estimates in $\mathscr{T}_r^p(w)$ for every $p,r \in (q_{-}(L), q_{+}(L))$ and $w \in A_{p/q_{-}(L)} \cap RH_{(q_{+}(L)/p)'}$. The latter provides a much easier proof of \cite[Theorem~2.4]{AP}, which only considers the unweighted situation.

Finally for the reverse inequalities for square roots we get 
\begin{align*}
	\|L^{1/2} F \|_{\mathscr{T}_r^p(w)} \lesssim \|\nabla_x F\|_{\mathscr{T}_r^p(w)},\quad \forall\,p,r \in (p_{-}(L), p_{+}(L)), \, \, 
	w \in A_{p/p_{-}(L)} \cap RH_{(p_{+}(L)/p)'}. 
\end{align*}
In particular we get the following Kato type estimates in weighted tent spaces:
\begin{align*}
	\|L^{1/2} F \|_{\mathscr{T}_r^p(w)} \approx \|\nabla_x F\|_{\mathscr{T}_r^p(w)},\quad \forall\,p,r \in (q_{-}(L), q_{+}(L)), \, \, 
	w \in A_{p/q_{-}(L)} \cap RH_{(q_{+}(L)/p)'}. 
\end{align*}
In these estimates it is understood that the operators in question are acting only on the space variable (much as in \eqref{ext-T}), but this time, abusing the notation, we have not used a different symbol to denote this extension.

\subsection*{Fractional operators}
Let $I_\alpha$ denote the fractional integral (aka Riesz potential) of order $\alpha\in (0,n)$ and let $M_\alpha$ be the associated maximal fractional integral operator. We write $\mathcal{I}_\alpha$ and $\mathcal{M}_\alpha$ for the associated extensions to $\ree_+$ (cf.~\eqref{ext-T}). In \cite[Theorem 1]{MW} it was shown that $I_\alpha$ is controlled by $M_\alpha$ in $L^p(w)$ for all $p\in (0,\infty)$ and $w\in A_\infty$. By Corollary~\ref{corol:main} part $(b)$ we readily obtain that $\mathcal{I}_\alpha$ is controlled by $\mathcal{M}_\alpha$ in the weighted tent spaces $\mathscr{T}_r^p(w)$, for every $0<p,r<\infty$ and $w\in A_\infty$. On the other hand, \cite{MW} also shows that $I_\alpha$ and $M_\alpha$ are bounded from $L^p(w^p)$ to $L^q(w^q)$ for every $p\in(1, n/\alpha)$, $q$ so that $\frac1p-\frac1q=\frac\alpha{n}$, and $w\in A_{p,q}$. Invoking Corollary~\ref{corol:main} part (d) we conclude that  $\mathcal{I}_\alpha$ and $\mathcal{M}_\alpha$ are bounded in the weighted tent spaces $\mathscr{T}_r^p(w)$ for every $p\in(1, n/\alpha)$, $q$ so that $\frac1p-\frac1q=\frac\alpha{n}$, $r\in (1,\infty)$, and $w\in A_{p,q}$. This extends \cite[Theorem 2.3]{AP} which only considers the unweighted case and where it was additionally assumed that $r>n/(n-\alpha)$.

\section{Notation}\label{section:prelim}

Throughout this paper, we work in $\re^n$, $n\ge 1$, endowed with the Lebesgue measure which is denoted by $dx$. The characteristic function of a measurable set $E$ is denoted by $\mathbf{1}_E$, and we use $B$ to denote Euclidean balls. The Hardy-Littlewood maximal operator $M$ is defined for each locally integrable function $f$ by 
\begin{equation}\label{eq:M-def}
	M f(x) := \sup_{x\in B} \fint_B |f(y)|\,dy, \qquad x\in\re^n, 
\end{equation} 
where the sup runs over all Euclidean balls $B$ containing $x$. Here and elsewhere barred integrals are used to denote averages. 

A measurable function $w$ on $\re^n$ is called a weight if $0<w(x)<\infty$ for a.e.~$x \in \re^n$. For every $p\in(0,\infty)$ and every weight $w$, we define the associated weighted Lebesgue space $L^p(w):=L^p(w(x)\,dx)$ and the same applies to weighted Lorentz spaces $L^{p,s} (w):=L^{p,s}(w(x)\,dx)$ for $p,s\in (0,\infty)$. 

Recall that we define tent spaces as follows: given $0<r,p<\infty$ and a weight $w$ we say that $F\in L^r_\loc(\ree_+)$ belongs to $\mathscr{T}_r^q(w)$ provided
\[
\|F\|_{\mathscr{T}_r^p(w)}
:=
\|\mathcal{A}_r F\|_{L^p(w)}
:
=
\Big\|\Big( \iint_{|\,\cdot\,-y|<t} |F(y,t)|^r\,\frac{dy\,dt}{t^{n+1}}\Big)^{\frac1r}
\Big\|_{L^p(w)}<\infty.
\]
Weighted Lorentz tents spaces are defined similarly.

Given $p \in (1, \infty)$, we define the class $A_p$ as the collection of all weights $w$ satisfying 
\begin{equation*}
	[w]_{A_p}:=\sup_{B} \left(\fint_{B} w(x)\, dx\right) \left(\fint_{B}w(x)^{1-p'}\, dx \right)^{p-1}<\infty,
\end{equation*} 
where $p'$ is the H\"older conjugate exponent of $p$, i.e., $\frac1p+\frac{1}{p'}=1$. As for the case $p=1$, we say that $w\in A_{1}$ if  
\begin{equation*}
	[w]_{A_1} := \sup_{B} \left(\fint_{B} w(y)\, dy\right) \|w^{-1}\mathbf{1}_{B}\|_{L^{\infty}(\re^n)}
	<\infty.
\end{equation*}
Finally, we define 
\begin{equation*}
	A_{\infty} :=\bigcup_{p\geq 1}A_{p},
\end{equation*} 
and $[w]_{A_\infty}=\inf_{p:w \in A_{p}} [w]_{A_{p}}$.

Given $1<p\le\infty$, $1 \leq q < \infty$, we define the class $A_{p,q}$ as the collection of all weights $w$ satisfying
\begin{equation}\label{Apq}
	[w]_{A_{p, q}} := \sup_{B } \left(\fint_{B}w(x)^q\, dx \right) \left(\fint_{B}w(x)^{-p'}\, dx\right)^{\frac{q}{p'}}<\infty.
\end{equation} 
The reverse Hölder classes are defined in the following way: we say that $w\in RH_{s}$ for $s\in(1,\infty)$  if 
\begin{equation*}
	[w]_{RH_{s}} :=\sup_{B } \left(\fint_B w(x)^s\,dx\right)^{\frac1s} \left(\fint_B w(x)\,dx\right)^{-1} < \infty. 
\end{equation*} 
Regarding the endpoint $s=\infty$, $w \in RH_{\infty}$ means that
\begin{equation*}
	[w]_{RH_{\infty}} := \sup_{B} \|w\mathbf{1}_{B}\|_{L^{\infty}(\re^n)} \left(\fint_{B}w(x)\,dx\right)^{-1}<\infty.  
\end{equation*}

\section{Proof of the main result}\label{section:proof}

We begin with some auxiliary results.

\begin{lemma}\label{lemma:aver}
For every $0\le h\in L^1_\loc(\re^n)$, $x\in\re^n$, and $s,t\in (0,\infty)$ there holds
\[
\fint_{B(x,s)} \Big(\fint_{B(y,t)} h(z)\,dz\Big)\,dy
\le
2^n
\fint_{B(x,s+t)} h(y)\,dy.
\]
\end{lemma}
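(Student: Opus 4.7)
The plan is to reduce the double average to a single one via Fubini and then estimate the resulting geometric factor. Writing $c_n = |B(0,1)|$, I first express
\[
\fint_{B(x,s)} \Big(\fint_{B(y,t)} h(z)\,dz\Big)\,dy
= \frac{1}{c_n^2\, s^n\, t^n}\int_{\re^n}\int_{\re^n} \mathbf{1}_{B(x,s)}(y)\,\mathbf{1}_{B(y,t)}(z)\, h(z)\,dy\,dz.
\]
Since the integrand is non-negative and $h \in L^1_{\loc}$, Fubini is applicable; swapping the order of integration and using $z \in B(y,t) \Leftrightarrow y \in B(z,t)$, the inner $y$-integral becomes $|B(x,s) \cap B(z,t)|$.

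I then control this intersection in two complementary ways. Trivially, $|B(x,s) \cap B(z,t)| \le c_n \min(s,t)^n$. On the other hand, if $B(x,s) \cap B(z,t) \ne \emptyset$, the triangle inequality forces $|z-x| < s+t$, so the integrand vanishes for $z \notin B(x, s+t)$. Combining these observations yields
\[
\fint_{B(x,s)} \Big(\fint_{B(y,t)} h(z)\,dz\Big)\,dy
\le \frac{\min(s,t)^n}{c_n\, s^n\, t^n}\int_{B(x,s+t)} h(z)\,dz
= \frac{\min(s,t)^n (s+t)^n}{s^n t^n} \fint_{B(x,s+t)} h(z)\,dz.
\]
Since $\min(s,t)^n (s+t)^n / (s^n t^n) = (s+t)^n / \max(s,t)^n$ and $s+t \le 2\max(s,t)$, the prefactor is bounded by $2^n$ independently of $s$ and $t$, which is the desired conclusion.

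There is no real obstacle; the proof is essentially careful bookkeeping. The only mildly subtle point is the choice to bound the intersection measure by $c_n\min(s,t)^n$ rather than by either $c_n s^n$ or $c_n t^n$ alone: either one-sided bound would give an estimate of the correct shape but with a prefactor $(s+t)^n/s^n$ or $(s+t)^n/t^n$ that blows up when the two radii are of very different sizes, whereas the sharp symmetric bound is precisely what produces the uniform constant $2^n$.
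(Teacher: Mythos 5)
Your proof is correct, and it is cleaner than the one in the paper. The paper splits into the cases $s\le t$ and $t\le s$: when $s\le t$ it uses the containment $B(y,t)\subset B(x,s+t)$ and a simple ratio of volumes, and only when $t\le s$ does it invoke Fubini (after inserting the indicator of $B(x,s+t)$), bounding $|B(z,t)\cap B(x,s)|$ crudely by $|B(z,t)|$. You instead apply Fubini from the start, reduce everything to a single expression $|B(x,s)\cap B(z,t)|$ supported in $B(x,s+t)$, and then use the symmetric bound by $c_n\min(s,t)^n$; this automatically yields the prefactor $(s+t)^n/\max(s,t)^n\le 2^n$ without a case distinction. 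Both arguments rest on the same two facts (the support restriction $|z-x|<s+t$ and the scaling of ball volumes), but yours packages them in one line and makes transparent why the constant $2^n$ appears; the paper's case split is essentially an unrolled version of your observation that the relevant bound on the intersection is $\min(s,t)^n$. Minor remark: in the sentence where you invoke Fubini you might add explicitly that the identity $z\in B(y,t)\Leftrightarrow y\in B(z,t)$ is the point that makes the $y$-integral computable, but you in fact do say this, so nothing is missing.
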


\begin{proof}
Note first that if $y\in B(x,s)$ then $B(y,t)\subset B(x, s+t)$. Assume first that $s\le t$. Then,
\begin{multline*}
\fint_{B(x,s)} \Big(
\fint_{B(y,t)} h(z)\,dz\Big)\,dy
\le
\fint_{B(x,s)} \Big(\frac{|B(x,s+t)|}{|B(y,t)|}\,\fint_{B(x,s+t)} h(z)\,dz\Big)\,dy
\\
=
\frac{(s+t)^n}{t^n}\,\fint_{B(x,s+t)} h(z)\,dz\le
2^n
\fint_{B(x,s+t)} h(y)\,dy.
\end{multline*}
On the other hand, if $t\le s$, we observe that Fubini's theorem implies
\begin{multline*}
\fint_{B(x,s)} \Big(
\fint_{B(y,t)} h(z)\,dz\Big)\,dy
=
\fint_{B(x,s)} \Big(
\fint_{B(y,t)} h(z)\,\mathbf{1}_{B(x,s+t)}\,dz\Big)\,dy
\\
\le 
\fint_{B(x,s+t)} h(z)\,\Big(\fint_{B(z,t)}\frac{|B(x,s+t)|}{|B(x,s)|}\, dy\Big)\,dz
=
\frac{(s+t)^n}{s^n}\,\fint_{B(x,s+t)} h(z)\,dz\\
\le
2^n
\fint_{B(x,s+t)} h(y)\,dy.
\end{multline*}
This completes the proof.
\end{proof}

\begin{lemma}\label{lemma:aver_w}
Let $1\le p,r<\infty$ and $w\in A_p$. Define
\[
W_t(x)
:=
\fint_{B(x,t)} w(y)\,dy,
\qquad
x\in\re^n,\ t>0.
\]
Then, $W_t\in A_p$ with $[W_t]_{A_p}\le 2^{n\,p}\,[w]_{A_p}$, and for every $F\in L^r_\loc(\ree)$ 
\[
\|\mathcal{A}_r F\|_{L^r(w)}^r
=
v_n\int_0^\infty \|F(\cdot,t)\|_{L^r(W_t)}^r\,\frac{dt}{t},
\]
where $v_n=|B(0,1)|$.
\end{lemma}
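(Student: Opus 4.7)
The argument splits into two independent parts: the $A_p$ bound on $W_t$, and the identity relating the tent space norm to a $t$-integral of weighted $L^r$ norms. The second is essentially Fubini's theorem, so the real content lies in the first.

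For the $A_p$ bound, the plan is to apply Lemma \ref{lemma:aver} to both $w$ and an appropriate power of it, and to use Jensen's inequality to move this power inside the inner average. Fix a ball $B=B(x_0,s)$ and write $\hat B:=B(x_0,s+t)$. Lemma \ref{lemma:aver} applied to $h=w$ gives $\fint_B W_t(x)\,dx\le 2^n\fint_{\hat B} w$. When $p>1$ we observe that $1-p'<0$, so $x\mapsto x^{1-p'}$ is convex on $(0,\infty)$; Jensen's inequality yields
\[
W_t(x)^{1-p'}=\Big(\fint_{B(x,t)} w\Big)^{1-p'}\le \fint_{B(x,t)} w(y)^{1-p'}\,dy,
\]
and then Lemma \ref{lemma:aver} applied to $h=w^{1-p'}$ gives $\fint_B W_t^{1-p'}\le 2^n\fint_{\hat B} w^{1-p'}$. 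Multiplying,
\[
\Big(\fint_B W_t\Big)\Big(\fint_B W_t^{1-p'}\Big)^{p-1}\le 2^n\cdot 2^{n(p-1)}[w]_{A_p}=2^{np}[w]_{A_p},
\]
which is the desired bound after taking the supremum over $B$.

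The case $p=1$ requires a separate argument, and is the step I expect to need the most care since the Jensen trick is not available. The plan is to keep the upper bound $\fint_B W_t\le 2^n\fint_{\hat B} w$ from Lemma \ref{lemma:aver}, and to obtain a matching pointwise lower bound on $W_t$ inside $B$ by exploiting that for every $x\in B$ one has $B(x,t)\subset\hat B$. Indeed, $w\in A_1$ means $\fint_{\hat B} w\le [w]_{A_1}\,w(y)$ for a.e.\ $y\in\hat B$, hence a.e.\ $y\in B(x,t)$; averaging over $B(x,t)$ yields
\[
W_t(x)=\fint_{B(x,t)} w(y)\,dy\ge \frac{1}{[w]_{A_1}}\fint_{\hat B} w.
\]
Combining the two estimates gives $\fint_B W_t\le 2^n[w]_{A_1}\,W_t(x)$ for every $x\in B$, so $[W_t]_{A_1}\le 2^n[w]_{A_1}$.

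For the identity, I would use Fubini and the definition of $W_t$:
\[
\|\mathcal{A}_r F\|_{L^r(w)}^r
=\iint_{\ree_+}|F(y,t)|^r\,\frac{1}{t^{n+1}}\Big(\int_{B(y,t)} w(x)\,dx\Big)\,dy\,dt,
\]
and noting that $\int_{B(y,t)} w(x)\,dx=v_n\,t^n\,W_t(y)$ gives
\[
\|\mathcal{A}_r F\|_{L^r(w)}^r=v_n\int_0^\infty\!\int_{\re^n}|F(y,t)|^r\,W_t(y)\,dy\,\frac{dt}{t}=v_n\int_0^\infty\|F(\cdot,t)\|_{L^r(W_t)}^r\,\frac{dt}{t},
\]
which is the stated identity. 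No measurability issue arises because $W_t\ge 0$ and $F\in L^r_\loc(\ree_+)$, so Tonelli applies.
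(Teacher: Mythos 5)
Your proof is correct and follows essentially the same route as the paper: Lemma~\ref{lemma:aver} combined with Jensen's inequality applied to the convex map $s\mapsto s^{1-p'}$ for $1<p<\infty$, a direct use of the $A_1$ condition on the enlarged ball $B(x_0,s+t)$ for $p=1$, and Tonelli for the integral identity. In fact your $p=1$ step is phrased a bit more carefully than the paper's: the paper ends its chain by passing from $\essinf_{B(x_0,s+t)}w$ to $\essinf_{B(x_0,s)}w$, which is an unnecessary (and not directly useful) weakening, whereas what one actually needs is precisely the pointwise lower bound $W_t(x')\ge\essinf_{B(x_0,s+t)}w$ for $x'\in B$ that you derive by averaging the $A_1$ inequality over $B(x',t)\subset B(x_0,s+t)$.
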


\begin{proof}
Consider first the case $p=1$. Given $B=B(x,s)$, by Lemma~\ref{lemma:aver} we obtain
\begin{multline*}
\fint_B W_t(y)\,dy
=
\fint_{B(x,s)} \Big(\fint_{B(y,t)} w(z)\,dz\Big)\,dy
\le
2^n\,\fint_{B(x,s+t)} w(y)\,dy
\\
\le
2^n\,[w]_{A_1}\essinf_{B(x,s+t)} w
\le
2^n\,[w]_{A_1}\essinf_{B(x,s)} w
.
\end{multline*}
This readily implies that $W_t\in A_1$ with $[W_t]_{A_1}\le 2^{n}\,[w]_{A_1}$.

Assume next that $1<p<\infty$. Using that $\eta(t)=t^{1-p'}$, $t>0$, is convex; Jensen's inequality; and Lemma~\ref{lemma:aver} we arrive at
\begin{align*}
&\Big(\fint_B W_t(y)\,dy\Big)\Big(\fint_B W_t(y)^{1-p'}\,dy\Big)^{p-1}
\\
&\qquad\qquad=
\Big(\fint_{B(x,s)} \Big(\fint_{B(y,t)} w(z)\,dz\Big)\,dy\Big)
\,
\Big(\fint_{B(x,s)} \Big(\fint_{B(y,t)} w(z)\,dz\Big)^{1-p'}\,dy\Big)^{p-1}
\\
&\qquad\qquad\le
\Big(\fint_{B(x,s)} \Big(\fint_{B(y,t)} w(z)\,dz\Big)\,dy\Big)
\,
\Big(\fint_{B(x,s)} \Big(\fint_{B(y,t)} w(z)^{1-p'}\,dz\Big)\,dy\Big)^{p-1}
\\
&\qquad\qquad
\le
2^{n\,p}
\Big(\fint_{B(x,s+t)} w(y)\,dy\Big)
\,
\Big(\fint_{B(x,s+t)} w(y)^{1-p'}\,dy \Big)^{p-1}
\\
&\qquad\qquad\le
2^{n\,p} [w]_{A_p}.
\end{align*}
From here we can immediately see that $W_t\in A_p$ with $[W_t]_{A_p}\le 2^{n\,p}\,[w]_{A_p}$.

Finally, Fubini's theorem readily gives that
\begin{multline*}
	\|\mathcal{A}_r F\|_{L^r(w)}^r
	=
	\int_{\ree} \iint_{|x-y|<t} |F(y,t)|^r\,\frac{dy\,dt}{t^{n+1}}\,w(x)\,dx
	=
	\int_0^\infty\!\!\!\int_{\re^n} |F(y,t)|^r\,\Big(\frac1{t^n} \int_{B(y,t)} w(x)\,dx\Big)\,dy\,\frac{dt}{t}
	\\
	=
	v_n\,\int_0^\infty\!\!\!\int_{\re^n} |F(y,t)|^r\,W_t(y)\,dy\,\frac{dt}{t}
	=
	v_n\int_0^\infty \|F(\cdot,t)\|_{L^r(W_t)}^r\,\frac{dt}{t},
\end{multline*}
and this completes the proof. 
\end{proof}

We are now ready to prove Theorem~\ref{theor:extrapol}.

\begin{proof}[Proof of Theorem~\ref{theor:extrapol}]
We start with (i). Fix $r\in (1,\infty)$ and let $w\in A_r$ be arbitrary. By \eqref{extrapol-hyp} and \cite[Theorem~3.9]{CMP} it follows that \eqref{extrapol-hyp} holds with
$r$ in place of $p_0$ and with some non-decreasing function $\widetilde{\varphi}$ replacing $\varphi$. This, together with Lemma~\ref{lemma:aver_w}, gives
\begin{multline*}
	\|\mathcal{A}_r F\|_{L^r(w)}^r
=
v_n\int_0^\infty \|F(\cdot,t)\|_{L^r(W_t)}^r\,\frac{dt}{t}
\le
v_n
\int_0^\infty \widetilde{\varphi}([W_t]_{A_r})\|G(\cdot,t)\|_{L^r(W_t)}^r\,\frac{dt}{t}
\\
\le
v_n \widetilde{\varphi}(2^n[w]_{A_r})\,\int_0^\infty\|G(\cdot,t)\|_{L^r(W_t)}^r\,\frac{dt}{t}
=
\widetilde{\varphi}(2^n[w]_{A_r})\,\|\mathcal{A}_r G\|_{L^r(w)}^r.
\end{multline*}
Since $w\in A_r$ is arbitrary we can extrapolate invoking again \cite[Theorem~3.9]{CMP}  ---with the family of pairs $(\mathcal{A}_r F,\mathcal{A}_r G)$, where $(F,G)\in\mathcal{F}$--- to conclude that \eqref{extrapol-conc} holds.

The proof of (ii) and (iii) can be carried out similarly invoking \cite[Corollary~3.15 and Corllary 3.17]{CMP} and \cite[Theorem~2.1]{CMP:Ainfty}, and \cite[Theorem 3.31]{CMP} respectively (see also \cite[Theorem~4.9]{AM1}). Further details are left to the reader. 

Proving (iv) requires some extra arguments that are inspired by \cite[p.~49]{CMP}. Fix $r\in (1,\infty)$ and pick  $p_1$ and $q_1$ such that $1< p_1\le r \le q_1<\infty$ and $\frac1{p_1}-\frac1{q_1}=\frac1{p_0}-\frac1{q_0}$ (this is possible for all values of $r\in (1,\infty)$). Take an arbitrary $w\in A_{p_1,q_1}$ and $(F,G)\in\mathcal{F}$. Note that by Minkowski's inequality with exponent $q_1/ r\ge 1$ and Jensen's inequality we arrive at
\begin{multline}\label{qfvevar}
\|\mathcal{A}_r F\|_{L^{q_1}(w^{q_1})}^r
=
v_n\,\Big\| 
\int_0^\infty \Big(\fint_{B(\cdot,t)} |F(y,t)|^r\,dy\Big)\,\frac{dt}{t}\Big\|_{L^{\frac{q_1}{r}}(w^{q_1})}
\\
\le
v_n\,\int_0^\infty \Big\| \fint_{B(\cdot,t)} |F(y,t)|^r\,dy\Big\|_{L^{\frac{q_1}{r}}(w^{q_1})}\,\frac{dt}{t}
\le
v_n\,\int_0^\infty \Big\|\Big(\fint_{B(\cdot,t)} |F(y,t)|^{q_1}\,dy\Big)^\frac{r}{q_1}\Big\|_{L^{\frac{q_1}{r}}(w^{q_1})}\,\frac{dt}{t}.
\end{multline}

Define \[
\widetilde{W}_t(x)
:=
\Big(\fint_{B(x,t)} w(y)^{q_1}\,dy\Big)^{\frac1{q_1}},
\qquad
x\in\re^n,\ t>0.
\]
Using that $w\in A_{p_1,q_1}$ if and only if $w^{q_1}\in A_{1+q_1/p_1'}$, and Lemma~\ref{lemma:aver_w} we readily obtain that 
$\widetilde{W}_t^{q_1}\in A_{1+q_1/p_1'}$ with $[\widetilde{W}_t^{q_1}]_{A_{1+q_1/p_1'}}\le 2^{n\,(1+q_1/p_1')}\,[w^{q_1}]_{A_{1+q_1/p_1'}}$. Equivalently, $\widetilde{W}_t\in A_{p_{1},q_{1}}$ with $[\widetilde{W}_t]_{A_{p_1,q_1}}\le 2^{n\,(1/q_1+1/p_1')}\,[w]_{A_{p_{1},q_{1}}}$. On the other hand, using that $w\in A_{p_{1},q_{1}}$; that $\eta(t)=t^{-{p_1}/{p_1'}}$, $t>0$, is convex; and Jensen's inequality we obtain
\[
\widetilde{W}_t(x)^{p_1}
=
\Big(\fint_{B(x,t)} w(y)^{q_1}\,dy\Big)^{\frac{^{p_1}}{q_1}}
\le
[w]_{A_{p_{1},q_{1}}}^{p_1}\,\Big(\fint_{B(x,t)} w(y)^{-p_1'}\,dy\Big)^{-\frac{{p_1}}{p_1'}}
\le
[w]_{A_{p_{1},q_{1}}}^{p_1}\,\fint_{B(x,t)} w(y)^{p_1}\,dy.
\]
All this give for any $t\in (0,\infty)$ by Fubini's inequality and our hypothesis
\begin{align*}
\Big\|\Big(\fint_{B(\cdot,t)} |F(y,t)|^{q_1}\,dy\Big)^\frac{r}{q_1}\Big\|_{L^{\frac{q_1}{r}}(w^{q_1})}^{\frac{1}{r}}
&=
\Big(\int_{\re^n}\Big(\fint_{B(x,t)} |F(y,t)|^{q_1}\,dy\Big)\,w(x)^{q_1}\,dx\Big)^{\frac1{q_1}}
\\
&=
\Big(\int_{\re^n}|F(y,t)|^{q_1}\,\Big(\fint_{B(y,t)} w(x)^{q_1}\,dx\Big)\,dy\Big)^{\frac1{q_1}}
\\
&
=
\|F(\cdot,t)\|_{L^{q_1}(\widetilde{W}_t^{q_1})}
\\
&
\lesssim
\|G(\cdot,t)\|_{L^{p_1}(\widetilde{W}_t^{p_1})}
\\
&\lesssim
\Big(\int_{\re^n}|G(y,t)|^{p_1}\,\Big(\fint_{B(y,t)} w(x)^{p_1}\,dx\Big)\,dy\Big)^{\frac1{p_1}}
\\
&
=
\Big(\int_{\re^n}\Big(\fint_{B(x,t)} |G(y,t)|^{p_1}\,dy\Big)\,w(x)^{p_1}\,dx\Big)^{\frac1{p_1}}
%\\
%&=
%\Big\|\Big(\fint_{B(\cdot,t)} |G(y,t)|^{p_1}\,dy\Big)^\frac{r}{p_1}\Big\|_{L^{\frac{p_1}{r}}(w^{p_1})}^{\frac{1}{r}}
.
\end{align*}
Plugging this into \eqref{qfvevar}, using Minkowski's inequality with exponent $r/p_1\ge 1$ and Jensen's inequality we conclude that
\begin{multline*}
\|\mathcal{A}_r F\|_{L^{q_1}(w^{q_1})}^{p_1}
\lesssim
\Big(\int_0^\infty \Big(\int_{\re^n}\Big(\fint_{B(x,t)} |G(y,t)|^{p_1}\,dy\Big)\,w(x)^{p_1}\,dx\Big)^{\frac{r}{p_1}}\,\frac{dt}{t}\Big)^{\frac{p_1}{r}}
\\
\le
\int_{\re^n}\Big(\int_0^\infty \Big(\fint_{B(x,t)} |G(y,t)|^{p_1}\,dy\Big)^{\frac{r}{p_1}}\,\frac{dt}{t}\,\Big)^{\frac{p_1}{r}}\,w(x)^{p_1}\,dx
\\
\le
\int_{\re^n}\Big(\int_0^\infty \Big(\fint_{B(x,t)} |G(y,t)|^{r}\,dy\Big)\,\frac{dt}{t}\,\Big)^{\frac{p_1}{r}}\,w(x)^{p_1}\,dx
=
v_n^{-\frac{p_1}{r}}\|\mathcal{A}_r G\|_{L^{p_1}(w^{p_1})}^{p_1}.
\end{multline*}
All these show that $\|\mathcal{A}_r F\|_{L^{q_1}(w^{q_1})}\lesssim \|\mathcal{A}_r G\|_{L^{p_1}(w^{p_1})}$ for every $(F,G)\in\mathcal{F}$ and for every $w\in A_{p_1,q_1}$. At this point we invoke \cite[Theorem~3.23]{CMP}  ---with the family of pairs $(\mathcal{A}_r F,\mathcal{A}_r G)$, where $(F,G)\in\mathcal{F}$--- to obtain the desired estimate. 
\end{proof}

\begin{proof}[Proof of Corollary~\ref{corol:M-tent}]
	As observed above the case $1<p<\infty$ is particularly easy. Indeed, let $\mathcal{F}$ be the family of pairs $(\mathcal{M}(G),|G|)$, where $G:\ree_+ \to\re$ with $G(\cdot,t)\in L^1_\loc(\re)$. Note that for every $t>0$ and for every $w\in A_2$ by \cite{Buckley}
	\[
	\|\mathcal{M}(G)(\cdot,t)\|_{L^2(w)}
	=
	\|M(G(\cdot,t))\|_{L^2(w)}
	\lesssim
	[w]_{A_2}\,\|G(\cdot,t)\|_{L^2(w)}.
	\]
	We can then invoke Theorem~\ref{theor:extrapol} part (i) with $p_0=2$ to obtain for every $p,r\in (1,\infty)$ and every $w\in A_p$ 
	\[
	\|\mathcal{M}(G)\|_{\mathscr{T}_r^p(w)}
	\lesssim
	\|G\|_{\mathscr{T}_r^p(w)}.
	\]
	
	We are left with considering the case $p=1$. We invoke \cite[Lemma~4.1]{AP} which shows that for every $r\in (1,\infty)$, $t>0$ and $x\in\re^n$ the following estimates holds
	\[
	\Big(\fint_{B(x,t)} Mf(x)^r\,dx\Big)^{\frac1r}
	\lesssim
	\Big(\fint_{B(x,2t)} |f(x)|^r\,dx\Big)^{\frac1r}
	+
	M\Big(\fint_{B(\cdot,t)} |f(z)|\,dz\Big)(x).
	\]
	This implies that
	\[
	\mathcal{A}_r (\mathcal{M}(G))(x)
	\lesssim
	\mathcal{A}_r^2 G(x)
	+
	\Big(\int_0^\infty  \mathcal{M}(\widetilde{G})(x)^r\,\frac{dt}{t}\Big)^{\frac1r}
	\]
	with $\widetilde{G}(x,t)=\fint_{B(x,t)} |G(z,t)|\,dz$, and where 
	\[
	\mathcal{A}_r^2 (G)(x)
	=
	\Big(\iint_{|x-y|<2\,t} |F(y,t)|^r\,\frac{dy\,dt}{t^{n+1}}\Big)^{\frac1r}.
	\]
	Tchebychev's inequality and the change of angle formula (see for instance \cite[Section~3.1]{MP1}) imply 
	\[
	\|\mathcal{A}_r^2 G\|_{L^{1,\infty}(w)}
	\le
	\|\mathcal{A}_r^2 G\|_{L^{1}(w)}
	\lesssim
	\|\mathcal{A}_r G\|_{L^{1}(w)}
	=
	\|G\|_{\mathscr{T}_r^1(w)}.
	\]
	On the other hand the weighted Fefferman-Stein vector-valued weak-type estimates for the Hardy-Littlewood maximal function (see \cite{RRT}) and Jensen's inequality yield
	\begin{multline*}
		\Big\| \Big(\int_0^\infty  \mathcal{M}(\widetilde{G})(\cdot)^r\,\frac{dt}{t}\Big)^{\frac1r}\Big\|_{L^{1,\infty}(w)}
		\lesssim
		\Big\| \Big(\int_0^\infty  \widetilde{G}(\cdot,t)^r\,\frac{dt}{t}\Big)^{\frac1r}\Big\|_{L^{1}(w)}
		\\
		=
		\Big\| \Big(\int_0^\infty \Big( \fint_{B(\cdot,t)} |G(z,t)|\,dz\Big) ^r\,\frac{dt}{t}\Big)^{\frac1r}\Big\|_{L^{1}(w)}
		\le
		\Big\| \Big(\int_0^\infty \fint_{B(\cdot,t)} |G(z,t)|^r\,dz\,\frac{dt}{t}\Big)^{\frac1r}\Big\|_{L^{1}(w)}
		\\
		=
		v_n^{-1} \|\mathcal{A}_r G\|_{L^1(w)}
		=
		v_n^{-1} \|G\|_{\mathscr{T}_r^1(w)},
	\end{multline*}
	where $v_n=|B(0,1)|$. 	All these imply 
	\begin{multline*}
		\|\mathcal{M}(G)\|_{\mathscr{T}_r^{1,\infty}(w)}
		=
		\|\mathcal{A}_r (\mathcal{M}(G))\|_{L^{1,\infty}(w)}
		\lesssim
		\|\mathcal{A}_r^2 (G)\|_{L^{1,\infty}(w)}+
		\Big\| \Big(\int_0^\infty  \mathcal{M}(\widetilde{G})(\cdot)^r\,\frac{dt}{t}\Big)^{\frac1r}\Big\|_{L^{1,\infty}(w)}
		\\
		\lesssim
		\|G\|_{\mathscr{T}_r^1(w)}.
	\end{multline*}
	This completes the proof.
\end{proof}

\section{Off diagonal bounds and Weighted estimates}
\label{section:od}

As we have seen along the paper, Rubio de Francia's extrapolation can be used to obtain estimates in tent spaces once some family of weighted estimates are known. Nonetheless, there are other extrapolation techniques, based on  off-diagonal estimates, which can be used to derive estimates in the scale of tent spaces.

Let us first introduce some notation. Given a family of bounded linear operators $(T_{t})_{t>0}$, we recall that for each $t>0$, we write $\mathcal{T}_t$ for the extension of $T_t$ to functions defined in  $\ree_+$. We then set for each $F:\ree_+\to\re$
\[
\mathbf{T}(F)(x,t)
:=
\mathcal{T}_t(F)(x)
=T_t(F(\cdot,t))(x),
\qquad 
(x,t)\in\ree_+.
\]
We next recall the definition of off-diagonal decay:
\begin{definition}
Let $1<r<\infty$.
A family of bounded linear operators $(T_{t})_{t>0}$ on $L^{r}(\R^{n})$ is said to 
have $L^r-L^r$ off-diagonal decay of order $M>0$ if
$$
\|\mathbf{1}_{E}T_{t}(\mathbf{1}_{F}f)\|_{r} \lesssim \Big(1+\frac{d(E,F)}{t}\Big)^{-M}\|\mathbf{1}_{F}f\|_{r},
$$
for all Borel sets $E,F \subset \R^{n}$, all $t>0$, and all $f \in L^{r}(\R^{n})$.
\end{definition}

In \cite[Theorem~5.2]{HNP} it is shown that  if $(T_{t})_{t>0}$ has $L^2-L^2$ off-diagonal decay of order $M>\frac{n}{\min(p,2)}$, then $\mathbf{T}$ extends to a bounded operator on $\mathscr{T}_2^p$. This result is perhaps the simplest example of an extrapolation result based on off-diagonal bounds. We observe that if $T_{t}\equiv T$ for a given Calder\'on-Zygmund operator $T$, off-diagonal decay is, in general, only of order $\frac{n}{2}$, and \cite[Theorem~5.2]{HNP} does not apply. However, Corollary \ref{corol:main} still gives that $\mathbf{T}$ extends to a bounded operator on $\mathscr{T}_2^p(w)$ for all $w \in A_{p}$.  

In many applications, one deals with families of operators $(T_{t})_{t>0}$ that have  $L^2-L^2$ off-diagonal decay of arbitrary high order (e.g., when the decay is exponential). Our next result shows that, for such families (the precise order needed here being $M>n$), $\mathbf{T}$ extends to a bounded operator on $\mathscr{T}_2^p(w)$ for all $p\in(1,\infty)$ and all $w \in A_{p}$.

\begin{proposition}\label{prop:safrfrw}
	\label{thm:od}
	Let $(T_t)_{t>0}$ be a family of bounded linear operators on $L^{r}(\R^{n})$ with $L^r-L^r$ off-diagonal decay of order $M$ for some $r\ge 1$ and some $M>n/r$.  
	For every $p$ satisfying $n/M< p<\infty$ and every $w\in A_{p\,M/n}$, $\mathbf{T}$ extends to a bounded operator on $\mathscr{T}_r^p(w)$.
\end{proposition}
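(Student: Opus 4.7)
The plan is to establish the $L^r(w)$ estimate
\[
\|\mathcal{A}_r \mathbf{T} F\|_{L^r(w)} \le \psi([w]_{A_{rM/n}})\, \|\mathcal{A}_r F\|_{L^r(w)}
\]
for every $w \in A_{rM/n}$, and then invoke the classical limited-range Rubio de Francia extrapolation for pairs of functions (e.g.\ \cite[Theorem~3.31]{CMP}) applied to the family $\{(\mathcal{A}_r \mathbf{T} F, \mathcal{A}_r F)\}_F$ with $p_0 = r$, $p_- = n/M$, and $p_+ = \infty$. This will transfer the single $L^r(w)$ estimate into $\|\mathcal{A}_r \mathbf{T} F\|_{L^p(w)} \lesssim \|\mathcal{A}_r F\|_{L^p(w)}$ for all $p \in (n/M, \infty)$ and all $w \in A_{pM/n}$, which is precisely the conclusion of the proposition (no Reverse Hölder condition appears in the conclusion because $p_+ = \infty$ makes $RH_{(p_+/p)'}$ trivial).

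To prove the $L^r(w)$ estimate I will first derive a pointwise average bound on $T_t$ from the off-diagonal hypothesis. For each $(x, t) \in \re^{n+1}_+$, decompose $\re^n = \bigcup_{k \ge 0} C_k(x, t)$ with $C_0 = B(x, 2t)$ and $C_k = B(x, 2^{k+1} t) \setminus B(x, 2^k t)$ for $k \ge 1$. Applying the off-diagonal decay with $E = B(x, t)$ and $F = C_k$, together with the trivial bound $\|\mathbf{1}_{C_k} f\|_r \le |B(x, 2^{k+1} t)|^{1/r} (\fint_{B(x, 2^{k+1} t)} |f|^r)^{1/r}$, produces a contribution of size $2^{-kM} \cdot 2^{(k+1) n/r} t^{n/r} (\fint_{B(x, 2^{k+1} t)} |f|^r)^{1/r}$ per annulus, which is summable in $k$ precisely because $M > n/r$; this yields
\[
\Big(\fint_{B(x, t)} |T_t f|^r\Big)^{1/r} \lesssim \sum_{k = 0}^{\infty} 2^{-k(M - n/r)} \Big(\fint_{B(x, 2^{k+1} t)} |f|^r\Big)^{1/r}.
\]
Specializing to $f = F(\cdot, t)$, raising to the $r$-th power, integrating against $v_n\,dt/t$, and applying Minkowski's inequality in $L^r(\re_+, dt/t)$, together with the identity
\[
\int_0^\infty \fint_{B(x, 2^{k+1} t)} |F(\cdot, t)|^r\,\frac{dt}{t} = v_n^{-1}\, 2^{-(k+1) n}\, \mathcal{A}_r^{2^{k+1}} F(x)^r,
\]
where $\mathcal{A}_r^\lambda$ denotes the area function with aperture $\lambda$, gives the pointwise comparison
\[
\mathcal{A}_r \mathbf{T} F(x) \lesssim \sum_{k = 0}^{\infty} 2^{-k(M - n/r) - (k+1) n/r}\, \mathcal{A}_r^{2^{k+1}} F(x).
\]

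To close the $L^r(w)$ estimate I need a weighted change-of-aperture formula. A direct Fubini computation in the spirit of Lemma~\ref{lemma:aver_w} gives
\[
\|\mathcal{A}_r^\lambda F\|_{L^r(w)}^r = v_n\, \lambda^n \int_0^\infty \|F(\cdot, t)\|_{L^r(W_t^\lambda)}^r\,\frac{dt}{t}, \qquad W_t^\lambda(x) := \fint_{B(x, \lambda t)} w(y)\,dy,
\]
and the standard $A_p$ doubling bound $w(B(x, \lambda t)) \le [w]_{A_p}\, \lambda^{n p}\, w(B(x, t))$ yields $W_t^\lambda \le [w]_{A_p}\, \lambda^{n(p-1)}\, W_t$ and therefore
\[
\|\mathcal{A}_r^\lambda F\|_{L^r(w)} \le [w]_{A_p}^{1/r}\, \lambda^{n p / r}\, \|\mathcal{A}_r F\|_{L^r(w)} \qquad \text{for every } w \in A_p.
\]
Substituting $\lambda = 2^{k+1}$ into the pointwise comparison and taking $L^r(w)$ norms produces a geometric series in $k$ whose common ratio has exponent $n(p-1)/r - (M - n/r)$; this series converges if and only if $p < rM/n$. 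For the borderline class $w \in A_{rM/n}$ itself I invoke the openness (self-improvement) of Muckenhoupt weights: any $w \in A_{rM/n}$ lies in $A_q$ for some $q \in (1, rM/n)$, with $q$ and $[w]_{A_q}$ controlled quantitatively by $[w]_{A_{rM/n}}$, so applying the above bound with this $q$ in place of $p$ closes the argument with constants depending only on $[w]_{A_{rM/n}}$. The main obstacle is precisely this borderline case, where the geometric series just fails to converge at $p = rM/n$ and the quantitative openness of $A_p$ is what makes the series summable.
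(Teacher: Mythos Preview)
Your proof is correct and follows essentially the same approach as the paper's: both use the off-diagonal decay to obtain the same annular average estimate, control the change of aperture via the $A_p$ doubling property, invoke the openness of Muckenhoupt classes to make the resulting geometric series converge, and then extrapolate. The only cosmetic difference is that you package the final step as a direct application of the limited-range extrapolation theorem with $p_-=n/M$ and $p_+=\infty$, whereas the paper unfolds that theorem by hand via the rescaling trick, introducing an auxiliary exponent $p_0$ and applying the classical Rubio de Francia theorem to the pair $\big((\mathcal{A}_r\mathbf{T}F)^{r/p_0},(\mathcal{A}_r F)^{r/p_0}\big)$.
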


The role of $L^2-L^2$ off-diagonal and weighted assumptions in extrapolating tent space estimates can thus be summarized as follows: 
\begin{enumerate}
	\item Regardless of the order of off-diagonal decay if weighted bounds are available, then Rubio de Francia's extrapolation gives that $\mathbf{T}$ extends to a bounded operator on $\mathscr{T}_2^p(w)$ for all $p\in(1,\infty)$ and all $w \in A_{p}$ (Corollary \ref{corol:main}). 
	
	\item If the order of off-diagonal decay is greater than $\frac{n}{\min(p,2)}$, then extrapolation based on off-diagonal bounds gives that $\mathbf{T}$ extends to a bounded operator on $\mathscr{T}_2^p$ (\cite[Theorem 5.2]{HNP}). This follows as well from  Proposition \ref{thm:od} by taking $M=\frac{n\,(1+\epsilon)}{\min(p,2)}$ with $0<\epsilon\ll 1$, $r=2$, and $w\equiv 1$.

	\item If the order of off-diagonal decay is greater than or equal to $n$ (that is the case if the decay is exponential), then Proposition \ref{thm:od} with $r=2$ and $M=n$ gives that $\mathbf{T}$ extends to a bounded operator on $\mathscr{T}_2^p(w)$ for all $p\in(1,\infty)$ and all $w \in A_{p}$, without assuming a priori weighted estimates.
\end{enumerate}

\begin{proof}[Proof of Proposition~\ref{prop:safrfrw}]
Fix $n/M< p<\infty$ and $w_0\in A_{p\,M/n}$. Using the openness of the Muckenhoupt classes of weights and that $M>n/r$  we can find $\eta$ with $\frac{n}{\min\{p,r\}\,M}<\eta<1$ so that $w_0\in A_{p\,M\,\eta /n}$. We can then pick $p_0$ such that $\max\{r/p,1\}< p_0< r\,M\,\eta/n$ and $w_0\in A_{p\,p_0/r}$.

Keeping these choices in mind we are going to use Rubio de Francia's extrapolation to obtain our desired estimate. For starters, localizing $f$ on dyadic annuli of the form 
	$C_1(x,t)=B(x,4\,t)$ and $C_j(x,t)=B(x,2^{j+1}\,t)\setminus B(x,2^{j}\,t)$ for $j\ge 2$,
	and using the off-diagonal decay assumption, we first note that
	\[
	\Big(\fint_{B(x,t)} |T_t f(y)|^r\,dy\Big)^{\frac1r}
	\lesssim
	\sum_{j=1}^\infty 2^{-j(M-\frac{n}{r})} \Big(\fint_{B(x,2^j\,t)} |f(y)|^r\,dy\Big)^{\frac1r},
	\qquad
	\text{for every }x\in\re^n,\ t>0.
	\]
Take next an arbitrary $w\in A_{p_0}$ and define $W_t$ as in Lemma \ref{lemma:aver_w}. The fact that $w\in A_{p_0}$ easily implies that 
	\[
	w(\lambda B)
	\le
	[w]_{A_{p_0}}\, \lambda^{n p_{0}} \,w(B),
	\qquad \text{for every ball $B$ and for every }\lambda>1.
	\]
	This readily implies that $W_{\lambda\,t}(x)\le [w]_{A_{p_0}}\,\lambda^{n (p_0-1)} W_t(x)$ for every $x\in\re^n$, $t>0$, and $\lambda>1$. This together with our assumptions give for any fixed $t>0$:
	\begin{align*}
		\|\mathbf{T}(F)(\cdot, t)\|_{L^r(W_t)}
		&=
		\Big(\int_{\re^n} \Big(\fint_{B(x,t)} |T_t(F(\cdot,t))(y)|^r\,dy\Big)\,w(x)\,dx\Big)^\frac1r
		\\
		&\le
		\Big(\int_{\re^n} \Big(\sum_{j=1}^\infty 2^{-j(M-\frac{n}{r})}\,\Big(\fint_{B(x,2^j\,t)} |F(y,t)|^r\,dy\Big)^{\frac1r}\Big)^r\,w(x)\,dx\Big)^\frac1r
		\\
		&\le
		\sum_{j=1}^\infty 2^{-j(M-\frac{n}{r})}\,\Big(\int_{\re^n} \Big(\fint_{B(x,2^j\,t)} |F(y,t)|^r\,dy\Big)\,w(x)\,dx\Big)^\frac1r
		\\
		&=
		\sum_{j=1}^\infty 2^{-j(M-\frac{n}{r})}\, \Big(\int_{\re^n} |F(y,t)|^r\, W_{2^j\,t}(y)\,dy\Big)^\frac1r
		\\
		&\le
		[w]_{A_{p_0}}\,\sum_{j=1}^\infty 2^{-j(M-\frac{p_0\,n}{r})}\, \Big(\int_{\re^n} |F(y,t)|^r\, W_{t}(y)\,dx\Big)^\frac1r
		\\
		&\lesssim
		[w]_{A_{p_0}}\,\|F(\cdot, t)\|_{L^r(W_t)}, 
	\end{align*}
	where we have used the fact that $p_0\,n/r<M\,\eta<M$.
	This and Lemma \ref{lemma:aver_w} allows us to arrive at
	\begin{multline*}
		\int_{\re^n} \mathcal{A}_r(\mathbf{T}(F))(x)^r\,w(x)\,dx
		=
		v_n\int_0^\infty \|\mathbf{T}(F)(\cdot, t)\|_{L^r(W_t)}^r\,\frac{dt}{t}
		\\
		\lesssim
		v_n\,[w]_{A_{p_0}}\, \int_0^\infty \|F(\cdot, t)\|_{L^r(W_t)}^r\,\frac{dt}{t}
		=
		[w]_{A_{p_0}}\,\int_{\re^n} \mathcal{A}_r(F)(x)^r\,w(x)\,dx.
	\end{multline*}
	We can rewrite this as
	\[
	\int_{\re^n} \big(\mathcal{A}_r(\mathbf{T}(F))(x)^{\frac{r}{p_0}}\big)^{p_0}\,w(x)\,dx
	\le
	M_0^{r}\,[w]_{A_{p_0}}\,\int_{\re^n} \big(\mathcal{A}_r(F)(x)^{\frac{r}{p_0}}\big)^{p_0}\,w(x)\,dx.
	\]
	Since $w\in A_{p_0}$ is arbitrary we can invoke Rubio de Francia extrapolation theorem \cite[Theorem~3.9]{CMP}  with the family of pairs $\big(\mathcal{A}_r(\mathbf{T}(F))(x)^{\frac{r}{p_0}},\mathcal{A}_r(F)(x)^{\frac{r}{p_0}}\big)$, and obtain
	\begin{equation}\label{rrrrr}
		\int_{\re^n} \big(\mathcal{A}_r(\mathbf{T}(F))(x)^{\frac{r}{p_0}}\big)^{q}\,v(x)\,dx
		\lesssim 
		\int_{\re^n} \big(\mathcal{A}_r(F)(x)^{\frac{r}{p_0}}\big)^{q}\,v(x)\,dx,
		\end{equation}
		for all $1<q<\infty$ and all $v\in A_q$. 
	
To obtain the desired estimate we select $q:=p\,p_0/r$ and $v=w_0$. Note that our previous choices  guarantee that $1<q<\infty$ and $v\in A_q$. Hence \eqref{rrrrr} holds and the proof is complete. 
\end{proof}
\begin{remark}
	A remaining open question is to determine the optimal level of off-diagonal decay required to extrapolate tent space estimates. Noting that $\mathscr{T}_2^2$ estimates do not require any off-diagonal decay (provided that the family $(T_t)_{t>0}$ is uniformly bounded on $L^2$), one would thus expect the required level of off-diagonal decay for a $\mathscr{T}_p^2$ estimate to improve as $p$ approaches $2$. Unfortunately, our current methods, whether based on the optimal change of angle in tent spaces in \cite{HNP} or on the doubling property of Muckenhoupt weights in Proposition \ref{thm:od}, do not seem to pick up such an  improvement as $p$ approaches $2$. 
\end{remark}

\end{document}